\definecolor{dark_purple}{rgb}{0.4, 0.0, 0.4}
\definecolor{dark_green}{rgb}{0.0, 0.7, 0.0}
\newtheorem{theorem}{Theorem}[section]
\theoremstyle{definition}
\newtheorem{example}[theorem] {Example}
\newtheorem{proposition}[theorem] {Proposition}
\newtheorem{remark}[theorem] {Remark}
\newtheorem{lemma}[theorem] {Lemma}
\newtheoremstyle{Theorem-intro}{}{}{\itshape}{}{\bfseries}{.}{ }{\thmname{#1}\thmnote{ \bfseries #3}}
\theoremstyle{Theorem-intro}
\newtheorem{thm-intro}{Theorem}
\newtheoremstyle{Proposition-intro}{}{}{\itshape}{}{\bfseries}{.}{ }{\thmname{#1}\thmnote{ \bfseries #3}}
\theoremstyle{Proposition-intro}
\newtheorem{prop-intro}{Proposition}
\newtheoremstyle{Corollary-intro}{}{}{\itshape}{}{\bfseries}{.}{ }{\thmname{#1}\thmnote{ \bfseries #3}}
\theoremstyle{Corollary-intro}
\newtheorem{cor-intro}{Corollary}
\newtheoremstyle{Lemma-intro}{}{}{\itshape}{}{\bfseries}{.}{ }{\thmname{#1}\thmnote{ \bfseries #3}}
\theoremstyle{Lemma-intro}
\newtheorem{lem-intro}{Lemma}
\def \g{\mathfrak{g}}
\def \h{\mathfrak{h}}
\def \m{\mathfrak{m}}
\def \k{\mathfrak{k}}
\def \p{\mathfrak{p}}
\def \t{\mathfrak{t}}
\def \f{\mathfrak{f}}
\def \l{\mathfrak{l}}
\def \u{\mathfrak{u}}
\def \q{\mathfrak{q}}
\def \sl{\mathfrak{sl}}
\def \spin{\mathfrak{spin}}
\def \Spin{\mathrm{Spin}}
\def \U{\mathrm{U}}
\def \SU{\mathrm{SU}}
\def \SO{\mathrm{SO}}
\def \C{\mathcal{C}}
\def \D{\mathcal{D}}
\def \V{\mathcal{V}}
\def \H{\mathcal{H}}
\def \N{\mathcal{N}}
\def \O{\mathcal{O}}
\def \d{\mathrm{d}}
\def \i{\mathrm{i}}
\def \zn{\mathbb{Z}}
\def \rn{\mathbb{R}}
\def \cn{\mathbb{C}}
\def \hn{\mathbb{H}}
\def \Oc{\mathbb{O}}
\def \P{\mathbb{P}}
\DeclareMathOperator{\tr}{tr}
\DeclareMathOperator{\Ad}{Ad}
\DeclareMathOperator{\ad}{ad}
\DeclareMathOperator{\im}{Im}
\DeclareMathOperator{\di}{dim}
\DeclareMathOperator{\spa}{span}
\title[]{Harmonic surfaces in the Cayley plane}
\author[N. Correia]{Nuno Correia}
\address{Centro de Matem\'{a}tica e Aplica\c{c}{\~{o}}es (CMA-UBI), Universidade da Beira Interior, 6201 -- 001
Covilh{\~{a}}, Portugal.}
\email{ncorreia@ubi.pt}
\author[R. Pacheco]{Rui Pacheco}
\address{Centro de Matem\'{a}tica e Aplica\c{c}{\~{o}}es (CMA-UBI), Universidade da Beira Interior, 6201 -- 001
Covilh{\~{a}}, Portugal.}
\email{rpacheco@ubi.pt}
\thanks{The first and the second author were partially supported by Funda\c{c}\~{a}o para a Ci\^{e}ncia e Tecnologia through the project UID/MAT/00212/2019.}
\author[M. Svensson]{Martin Svensson}
\address{Department of Mathematics and Computer Science, University of Southern Denmark, Campusvej 55, 5230 Odense M, Denmark.}
\email{svensson@imada.sdu.dk}
\thanks{}
\keywords{harmonic maps, Riemann surfaces, Cayley plane, Twistor theory}
\subjclass[2010]{Primary 58E20; Secondary 53C43}
\begin{document}

\begin{abstract}
We consider the twistor theory of nilconformal harmonic maps from a Riemann surface into the Cayley plane $\Oc P^2=F_4/\Spin(9)$.  By exhibiting this symmetric space as a submanifold of the Grassmannian of $10$-dimensional subspaces of the fundamental representation of $F_4$,  techniques and constructions similar to those used in earlier works  on twistor constructions of nilconformal harmonic maps into classical Grassmannians can also be applied in this case. The originality of our approach lies on the use of the classification of nilpotent orbits in Lie algebras as described by D. Djokovi\'{c}.
\end{abstract}

\maketitle

\section{Introduction}
In this paper we consider the twistor theory of nilconformal harmonic maps from a Riemann surface into the Cayley plane $\Oc P^2$, the $16$-dimensional exceptional Riemannian symmetric space $F_4/\Spin(9)$. Exhibiting this manifold as a submanifold of the Grassmannian of $10$-dimensional subspaces of the fundamental representation of $F_4$ allows us to use techniques and constructions similar to those used in earlier works on twistor constructions of nilconformal harmonic maps into classical Grassmannians \cite{svensson-wood2014}, as well as the exceptional Grassmannian $G_2/\SO(4)$ \cite{svensson-wood2015}.

We thus describe the canonical twistor fibrations over $\Oc P^2$ in terms of refinements of the Grassmannian structure of the base manifold. As this structure is defined in terms of the fundamental representation of $F_4$, we may use the algebraic structure of this representation to show that the nilconformality is actually implied by the conformality:

\begin{prop-intro}[\ref{prop:nilconformal}] Any weakly conformal harmonic map from a Riemann surface to $\Oc P^2$ is nilconformal with nilorder $\leq5$.
\end{prop-intro}

Following the general twistor theory for harmonic maps into compact symmetric spaces as developed in \cite{BurstallRawnsley}, we see that there are three canonical twistor fibrations over $\Oc P^2$: $T_4$, of dimension $30$; $T_3$ of dimension $40$; $T_{34}$ of dimension $42$. As the dimensions of these twistor fibrations are far higher than for example those of the exceptional Grassmannian $G_2/\SO(4)$ and too high to be handled in a straightforward fashion as in \cite{svensson-wood2015}, we have chosen an approach significantly different from that of previous works in our study of the twistor lifts of nilconformal harmonic maps, making use of the classification of nilpotent orbits in Lie algebras as described in \cite{Collingwood}. And while there are three different canonical twistor fibrations over $\Oc P^2$, only one of these, $T_4$,  a submanifold of the isotropic lines in the fundamental representation of $F_4$, is needed to describe the twistor theory of nilconformal harmonic maps. More precisely, we show the following result, where $M$ is an arbitrary Riemann surface:

\begin{thm-intro}[\ref{thm-twistor}] Any  weakly conformal harmonic map $\varphi:M\to \Oc P^2$ admits a $J_2$-holomorphic lift into $T_4$.
\end{thm-intro}

The almost complex structure $J_2$ is the classical non-integrable structure that ensures that the projection of any holomorphic map from a Riemann surface to $T_4$ to $\Oc P^2$ is harmonic. As $T_4$ is a $3$-symmetric space, $J_2$-holomorphicity for a map is equivalent to the map being \emph{primitive} \cite{BurstallPedit}. 

We next consider the special case of harmonic maps into $\Oc P^2$ of finite uniton number. Following Burstall and Guest \cite{B-G}, we show how to explicitly construct examples of $S^1$-invariant extended solutions as well as their twistor lifts. When the domain is a torus, the class of \emph{harmonic maps of finite type} (which are constructed from commuting Hamiltonian flows on finite dimensional subspaces of a loop algebra) plays a fundamental role. As a matter of fact, it was shown in \cite{BFP} that any  harmonic map of a torus $T^2$ into a compact rank one symmetric space is of finite type if it is not weakly conformal. On the other hand, any non-constant harmonic map from $T^2$ into $\cn P^n$ is either of finite type or finite uniton number but not both \cite{B,O-U,Pa1}. The corresponding statement for $\hn P^2$ is known to be false \cite{Pa2}, and consequently it  is also false for $\Oc P^2$, since $\hn P^2$  can be totally geodesically \cite{wolf} embedded in  $\Oc P^2$.

While our approach using the classification of nilpotent orbits is novel, it is nevertheless a natural approach when studying nilconformal harmonic maps into symmetric spaces or, more generally, Lie groups. Our work suggests a revisit of earlier results on this topic using this approach, as well as a path to understand the twistor theory of harmonic maps for the remaining exceptional symmetric spaces.

This paper is arranged as follows: in Section \ref{se:Cayley} we give a brief introduction to the Cayley plane $\Oc P^2$, showing how this can be viewed as a particular Grassmannian manifold related to the exceptional Jordan algebra of hermitian $3\times3$ octonionic matrices. In Section \ref{se:twistor} we construct the canonical twistor fibrations of $\Oc P^2$ and study the properties of twistor lifts of harmonic maps into $\Oc P^2$. Nilconformality is introduced in Section \ref{se:nilconformal} where we use the classification of nilptotent orbits in the Lie algebra of $F_4$ to show that any nilconformal harmonic map from a Riemann surface into $\Oc P^2$ admits a twistor lift into $T_4$. In Section \ref{se:unitons} we give some explicit examples of harmonic maps of finite uniton number into the Cayley plane. Appendix \ref{ap:roots} collects some facts about the structure of the Lie algebra $\f_4$ of $F_4$ and its fundamental representation and Appendix \ref{ap:nilpotent} gives a very brief introduction to the classification of nilpotent orbits in Lie algebras and show how this applies to $\f_4$.

A few words about conventions: throughout this paper,  we denote the complexification $V\otimes_\rn\cn$ of a  real vector space or vector bundle $V$ by $V^\cn$. Unless otherwise mentioned, the symbol $G$ will always denote a compact semi-simple Lie group with trivial centre contained in $\U(n)$ for some $n$, while $\g$ will denote its Lie algebra.

\section{The Cayley plane}\label{se:Cayley}

In this section we recall some classical facts on the construction of the Cayley plane $\Oc P^2$ from an exceptional Jordan algebra. We also exhibit the Cayley plane as a subset of a Grassmannian, a construction we will find useful when studying the twistor theory of $\Oc P^2$. Our main references are \cite{adams, baez, Ha}.

We denote by $\Oc$ the $8$-dimensional division algebra of octonions. For $x\in\Oc$, we denote by $x^*$ the octonionic conjugation of $x$. The following identities hold for all $u, v\in\Oc$ and express the fact that $\Oc$ is an \emph{alternative, nicely normed $*$-algebra} \cite{adams,baez}:
\begin{equation}\label{alternative}
\begin{array}{ccc}
(uu)v=u(uv), &(uv)u=u(vu),& (vu)u=v(uu), \\
(uu^*)v=u(u^*v), &(uv)u^*=u(vu^*),& (vu^*)u=v(u^*u).
\end{array}
\end{equation}

Denote by $\h_3(\Oc)$ the $27$-dimensional simple formally real Jordan algebra of hermitian $3\times 3$ matrices with entries in $\Oc$, equipped with the product
$$
a\circ b=\frac12(ab+ba)\qquad(a, b\in\h_3(\Oc)).
$$
We also equip $\h_3(\Oc)$ with the inner product
$$
\langle a,b\rangle_\rn=\frac12\tr(a\circ b)\qquad(a, b\in\h_3(\Oc)).
$$
As is well known, the exceptional Lie group $F_4$ is realised as the automorphism group of  $\h_3(\Oc)$ with respect to the product $\circ$. Since $F_4$ also preserves the trace, we have $F_4\subset  \SO(\h_3(\Oc))$. Moreover, since the elements in $F_4$ fix the real subspace generated by the identity matrix in $\h_3(\Oc)$, they  preserve also the $26$-dimensional subspace $\h^0_3(\Oc)$ of the traceless matrices in $\h_3(\Oc)$. This space is the \emph{fundamental representation} of $F_4$, its smallest non-trivial representation, see Appendix \ref{ap:roots}. For $a, b\in\h_3(\Oc)$, we denote by $a\cdot b$ the traceless part of the product $a\circ b$.

The \emph{Cayley plane} $\Oc P^2$ is the $16$-dimensional manifold of matrices $P\in\h_3(\Oc)$ satisfying $P^2=P$ and $\tr(P)=1$. The inner product $\langle \cdot,\cdot\rangle_\rn$ on $\h_3(\Oc)$ induces a  Riemannian metric on $\Oc P^2$. The isometry group of $\Oc P^2$ is precisely $F_4$, acting transitively with isotropy subgroups conjugated to $\Spin(9)$.  Hence $\Oc P^2\cong F_4/\Spin(9)$, a compact Riemannian symmetric space of rank 1.

For $P\in\Oc P^2$, let $L_P$ denote the linear endomorphism of $\h_3(\Oc)$ given by
$$
L_P(X)=P\circ X\qquad(X\in\h_3(\Oc)).
$$
This gives a eigenspace decomposition
\begin{equation}\label{dec}
\h_3(\Oc)=A_0(P)\oplus A_{\frac12}(P)\oplus  A_{1}(P),
\end{equation}
where $A_j(P)$ is the eigenspace associated to the eigenvalue $j$, with $j=0,\frac12,1$. For example, if $P\in \Oc P^2$  is the diagonal matrix $\mathrm{diag}(1,0,0)$, then $A_1(P)$ is just the real span of $P$ while
\begin{align*}\label{A}
A_{0}(P)&=\Bigg\{\left(
       \begin{array}{ccc}
       0 & 0 & 0 \\
        0 & \beta & z \\
        0 & z^* & \gamma \\
      \end{array}
    \right): \beta,\gamma\in\rn,\,z\in\Oc\Bigg\},\\
     A_{\frac12}(P)&=\Bigg\{\left(
     \begin{array}{ccc}
        0 & x & y \\
        x^* & 0 & 0 \\
        y^* & 0 & 0 \\
      \end{array}
    \right): x,y\in\Oc\Bigg\}.
\end{align*}

Y. Huang and N. C. Leung \cite{HL} gave a uniform description of all compact Riemannian symmetric spaces as  Grassmannians. According to their description, $\Oc P^2$ is the space of all copies of $\h_2(\Oc)$ in $\h_3(\Oc)$. As we shall see, $\Oc P^2$  can also be interpreted as a certain Grassmannian of subspaces of $\h^0_3(\Oc)$. Our proof is independent of that in \cite{HL}.

\begin{theorem}\label{grass} There is an isometry between $\Oc P^2$ and the Grassmannian $Gr^a$ of $10$-dimensional subspaces $V$ of $\h^0_3(\Oc)$ satisfying $V\cdot V=V$.
\end{theorem}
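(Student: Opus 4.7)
The plan is to exhibit an explicit $F_4$-equivariant map $\Phi\colon \Oc P^2 \to Gr^a$ and show it is a bijective isometry. Motivated by the Peirce decomposition (\ref{dec}), the natural candidate is
$$
\Phi(P)\;:=\;\pi(A_0(P)),
$$
where $\pi\colon \h_3(\Oc)\to\h^0_3(\Oc)$ is the orthogonal projection $\pi(X)=X-\tfrac{1}{3}\tr(X)\,I$. One first checks that $\Phi(P)$ is $10$-dimensional: $\dim A_0(P)=10$, and $\ker\pi\cap A_0(P)=\rn I\cap A_0(P)=0$ since $L_P(I)=P\neq 0$. Moreover $\Phi$ is $F_4$-equivariant, since the $F_4$-action preserves $\circ$ (whence $A_0(g\cdot P)=g\cdot A_0(P)$) and commutes with $\pi$.

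The next step is to show $\Phi(P)\cdot\Phi(P)=\Phi(P)$. Expanding the definitions, for $X,Y\in A_0(P)$ one obtains
$$
\pi(X)\cdot\pi(Y)\;=\;\pi\bigl(X\circ Y-\tfrac{1}{3}\tr(X)\,Y-\tfrac{1}{3}\tr(Y)\,X\bigr),
$$
and the argument of $\pi$ lies in $A_0(P)$ by Peirce; hence $\Phi(P)\cdot\Phi(P)\subseteq\Phi(P)$. For the reverse inclusion, $F_4$-equivariance reduces the task to the base point $P_0=\mathrm{diag}(1,0,0)$, where a short direct calculation using the explicit description of $A_0(P_0)$ displayed above suffices: one finds for example $\pi(P_0)\cdot\pi(P_0)=\tfrac{1}{3}\pi(P_0)$ and $\pi(P_0)\cdot v=-\tfrac{1}{3}v$ for every $v$ in the orthogonal complement of $\pi(P_0)$ inside $\Phi(P_0)$, realising every element of $\Phi(P_0)$ as a $\cdot$-product.

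For bijectivity, I would construct an inverse. Given $V\in Gr^a$, set $W:=V+\rn I\subset\h_3(\Oc)$; the identity $V\cdot V=V$ forces $W\circ W\subseteq W$, so $W$ is an $11$-dimensional Jordan subalgebra of $\h_3(\Oc)$ containing $I$. I would then argue that such a $W$ is necessarily of the form $A_0(P)+A_1(P)$ for a unique $P\in\Oc P^2$, obtaining $P$ intrinsically as the unique rank-one idempotent $p$ in $W$ whose Peirce $1$-eigenspace $L_p(W)$ is one-dimensional; in the concrete model $W\cong\rn\oplus\h_2(\Oc)$ this is exactly the generator $(1,0)$ of the $\rn$-summand. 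This yields the inverse $V\mapsto P$, so $\Phi$ is a bijection. Finally, the isometry statement follows automatically: since $\Oc P^2$ is an irreducible Riemannian symmetric space, its $F_4$-invariant metric is unique up to a positive scalar, and a single-point check at $P_0$ fixes the scalar to be $1$.

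The main difficulty I anticipate lies in the recognition step of the third paragraph: proving that an arbitrary $11$-dimensional Jordan subalgebra $W\subset\h_3(\Oc)$ containing $I$ which arises as $V+\rn I$ for some $V\in Gr^a$ is necessarily of the expected type $\rn\oplus\h_2(\Oc)$ and admits the distinguished idempotent. A clean route is to classify such subalgebras via Peirce decompositions relative to a primitive idempotent of $W$; alternatively, one may replace the explicit inverse by a dimension/connectedness argument, showing that $\Phi(\Oc P^2)$ is both open and closed inside $Gr^a$ and invoking connectedness of the latter.
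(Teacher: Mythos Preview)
Your map $\Phi(P)=\pi(A_0(P))$ coincides with the paper's: the authors also identify $P$ with $\hat V_P\cap(\rn I)^\perp$, where $\hat V_P:=A_0(P)\oplus A_1(P)$, and this is exactly $\pi(A_0(P))$ since $I\in\hat V_P$. The verification that $\Phi(P)\in Gr^a$ and the $F_4$-equivariance argument for the isometry are likewise the same in spirit. So the two proofs agree up to the point you yourself flag as the difficulty.

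For the recognition step the paper does \emph{not} attempt either of your two suggested routes. Instead it invokes Racine's classification of maximal subalgebras of the exceptional Jordan algebra \cite{Ra}: every maximal subalgebra of $\h_3(\Oc)$ is either isomorphic to $\h_3(\hn)$ (dimension $15$) or equal to some $\hat V_P$ (dimension $11$). Given $V\in Gr^a$, the $11$-dimensional subalgebra $W=\rn I\oplus V$ sits in some maximal subalgebra; but it cannot sit inside a copy of $\h_3(\hn)$, because the maximal subalgebras of $\h_3(\hn)$ have dimensions $7$ and $9$. Hence $W=\hat V_P$ for some $P$, and uniqueness of $P$ is then a short direct computation at the base point. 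This is a clean one-line resolution of exactly the gap you identify.

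Your proposed alternatives are less solid. The Peirce-idempotent characterisation is somewhat circular as written: the claim that the distinguished idempotent is ``the generator $(1,0)$ of the $\rn$-summand'' already presupposes $W\cong\rn\oplus\h_2(\Oc)$, which is the very thing to be shown (and note that $\hat V_{P_0}$ contains many rank-one idempotents of $\h_3(\Oc)$, e.g.\ $\mathrm{diag}(0,1,0)$, so the selection criterion must be formulated and justified with care). The topological alternative would require establishing that $Gr^a$ is connected and that $\Phi(\Oc P^2)$ is open in it, neither of which is obvious before one knows $Gr^a\cong\Oc P^2$. So while your outline is correct and the gap is accurately located, Racine's theorem is the missing ingredient.
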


\begin{proof} Take $V\in Gr^a$ and let $\hat{V}$ be the maximal subalgebra of $\h_3(\Oc)$ containing the $11$-dimensional subalgebra $\rn I\oplus V$. According to Racine's classification of maximal subalgebras of exceptional Jordan algebras \cite{Ra}, either $\hat{V}$ is isomorphic to the $15$-dimensional Jordan algebra  $\h_3(\hn)$ or $\hat{V}$ is an $11$-dimensional subalgebra of the form
$$
\hat V_P= A_0(P)\oplus A_1(P),
$$
for some $P\in \Oc P^2$. However, as the maximal subalgebras of $\h_3(\hn)$ have dimension $7$ and $9$ (see \cite{Ra}), we must have $\hat{V}=\hat V_P$ for some $P\in \Oc P^2$ .

Assume now that $\hat V_P=\hat V_{P'}$ for $P, P'\in \Oc P^2$. In view of the orthogonal eigenspace decomposition \eqref{dec}, we must have $A_{\frac12}(P)=A_{\frac12}(P')$. On the other hand, by transitivity of the $F_4$ action on $\Oc P^2$, we can without loss of generality assume that $P=P_0=\mathrm{diag}(1,0,0)$. But a straightforward computation now shows that $A_{\frac12}(P')=A_{\frac12}(P)$ for $P'\in\Oc P^2$ is only possible if $P'=P_0$.

Thus, given $V\in Gr^a$, we have seen that there exists a unique $P\in \Oc P^2$ such that $\hat{V}=\hat V_P$. This defines an injection $\chi: Gr^a\to\Oc P^2$ mapping this $V$ to $P$.

Conversely, given $P\in\Oc P^2$, consider the $11$-dimensional maximal subalgebra $\hat V_P= A_0(P)\oplus A_1(P)$. Take $g\in F_4$ such that $P=g(P_0)$, where $P_0=\mathrm{diag}(1,0,0)$. Clearly we have $\hat V_P=g(\hat V_{P_0})$ and $\rn I\subset  \hat V_{P_0}$. Since $F_4$ preserves $\rn I$ we also have $\rn{I}\subset  \hat V_{P}$. Set
$$
V=\hat V_P\cap(\rn I)^\perp.
$$
It is now easy to see that $V\in Gr^a$ and $\chi(V)=P$, thus $\chi$ is also surjective. Since $\chi$ is $F_4$-invariant, $\chi$ is an isometry.
\end{proof}

Interpreting $\Oc P^2$ as the submanifold of $\h_3(\Oc)$ consisting of the projection matrices $P$ with $\tr(P)=1$, the tangent plane at $P$ to $\Oc P^2$ is given by
\begin{equation*}\label{TP}
T_P\Oc P^2=\{Z\in\h_3(\Oc):\, 2P\circ Z=Z  \}=A_{\frac12}(P).
\end{equation*}
On the other hand, given a projection matrix $P\in \Oc P^2$, let us denote by $V_P$ the corresponding subspace in $Gr^a$. With this identification, a tangent vector  $Z\in T_P\Oc P^2$ corresponds to an element
$$
\hat{Z}\in\mathrm{Hom}(V_P,V_P^\perp)\oplus \mathrm{Hom}(V_P^\perp,V_P).
$$
We can make this correspondence more explicit as follows.
\begin{proposition} For any $Z\in T_P\Oc P^2$ and $X\in\h^0_3(\Oc)$, we have
\begin{equation}\label{hatZ}
\hat{Z}(X)=-4 Z\circ X+4Z\circ (P\circ X)+4P\circ(Z\circ X).
\end{equation}
\end{proposition}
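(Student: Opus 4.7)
The plan is to identify $\hat Z$ with the derivative of the orthogonal projection $\pi_{V_P}\colon\h^0_3(\Oc)\to V_P$ along a curve $P(t)$ in $\Oc P^2$ with $P'(0)=Z$, and then to express this projection as a polynomial in $L_P$ that can be differentiated explicitly.

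The first step would be to identify the ambient orthogonal complement. Since $L_P$ is self-adjoint with respect to $\langle\cdot,\cdot\rangle_\rn$, the decomposition \eqref{dec} is orthogonal; and because $\rn I\subset A_0(P)\oplus A_1(P)$, the eigenspace $A_{\frac12}(P)$ lies entirely inside $\h^0_3(\Oc)$ and is perpendicular there to $V_P=(A_0(P)\oplus A_1(P))\cap(\rn I)^\perp$. A dimension count then gives $V_P^\perp=A_{\frac12}(P)$ inside $\h^0_3(\Oc)$. Next, since $L_P$ has minimal polynomial $t(t-\tfrac12)(t-1)$, Lagrange interpolation yields the spectral projection
\begin{equation*}
\pi_{A_{\frac12}(P)}=-4\,L_P(L_P-\mathrm{id}),
\end{equation*}
so that the projection onto $V_P$, regarded as an endomorphism of $\h^0_3(\Oc)$, is $\pi_{V_P}=\mathrm{id}+4\,L_P(L_P-\mathrm{id})$.

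Finally, I would invoke the standard fact that, for a smooth curve $V(t)$ of subspaces of an inner product space, the derivative $\tfrac{d}{dt}\big|_0\pi_{V(t)}$ is a self-adjoint endomorphism of the ambient space that exchanges $V(0)$ with $V(0)^\perp$, and that it realises the tangent vector to the Grassmannian as an element of $\mathrm{Hom}(V(0),V(0)^\perp)\oplus\mathrm{Hom}(V(0)^\perp,V(0))$. Applied to the curve $V_{P(t)}$, this identifies $\hat Z$ with $\tfrac{d}{dt}\big|_0\pi_{V_{P(t)}}$. A one-line Leibniz computation, using $\tfrac{d}{dt}\big|_0 L_{P(t)}=L_Z$ and $\tfrac{d}{dt}\big|_0 L_{P(t)}^2=L_ZL_P+L_PL_Z$, then applied to $X\in\h^0_3(\Oc)$, produces exactly \eqref{hatZ}. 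The only delicate point in the argument is the bookkeeping: verifying that $\hat Z$ really is the derivative of $\pi_{V_{P(t)}}$ (and not of $\pi_{V_{P(t)}^\perp}$), which accounts for the overall sign in the expression; the rest is routine spectral calculus.
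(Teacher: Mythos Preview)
Your proof is correct and follows essentially the same approach as the paper: express the orthogonal projection onto $V_P$ as the polynomial $\mathrm{id}-4L_P+4L_P^2$ in $L_P$ (you obtain this via Lagrange interpolation on the minimal polynomial, the paper states it directly), then differentiate along a curve in $\Oc P^2$ with velocity $Z$ to get \eqref{hatZ}. Your additional care about $V_P^\perp=A_{\frac12}(P)$ and the sign convention for $\hat Z$ is useful elaboration, but the argument is the same.
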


\begin{proof} Given a projection matrix $P\in \Oc P^2$ it follows easily that the orthogonal projection onto $V_P$ is given by
\begin{equation}\label{ortoproj}
X\mapsto X-4P\circ X+4P\circ (P\circ X)\qquad(X\in\h_3^0(\Oc)).
\end{equation}
Differentiating \eqref{ortoproj}  at $P$ in the direction $Z\in T_P\Oc P^2$, we obtain \eqref{hatZ}.
\end{proof}

\begin{proposition}\label{isotropic} Suppose that the tangent vector $Z\in T_P^\cn\Oc P^2$ is isotropic, that is, $\tr(Z\circ Z)=0$. Then $\hat{Z}$ defined by \eqref{hatZ} is nilpotent with nilorder  $r\leq 5$.
\end{proposition}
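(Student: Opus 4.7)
My plan is to reduce to a matrix normal form via $F_4$-invariance and then iterate the formula \eqref{hatZ} directly, using the Peirce decomposition with respect to $P$ to track the algebraic structure and the alternative-algebra identities \eqref{alternative} to handle the non-associativity of $\Oc$.

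By transitivity of the $F_4$-action on $\Oc P^2$, extended $\cn$-linearly to complex tangent spaces, and by the $F_4$-equivariance of \eqref{hatZ}, I may take $P=P_0=\mathrm{diag}(1,0,0)$, so that
$$
Z=\begin{pmatrix} 0 & x & y\\ x^* & 0 & 0\\ y^* & 0 & 0\end{pmatrix},\qquad x,y\in\Oc^\cn.
$$
A direct matrix computation gives $\tr(Z\circ Z)=2(xx^*+yy^*)$, so the isotropy hypothesis becomes $xx^*+yy^*=0$ and, using $xx^*=x^*x$ (an identity inherited from the real case of $\Oc$ by analytic continuation), also $x^*x+y^*y=0$; in particular $Z\circ Z\in A_0(P_0)^\cn$. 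With respect to the Peirce decomposition $\h^0_3(\Oc)^\cn=V_{P_0}^\cn\oplus A_{1/2}(P_0)^\cn$, where $V_{P_0}^\cn$ is the traceless part of $A_0(P_0)^\cn\oplus A_1(P_0)^\cn$, I would then verify (directly from \eqref{hatZ} together with $P_0\circ X=jX$ for $X\in A_j(P_0)$) that $\hat Z$ interchanges the two factors: on $A_0(P_0)^\cn$, $\hat Z(X_0)=-2Z\circ X_0\in A_{1/2}(P_0)^\cn$, and on $A_{1/2}(P_0)^\cn$, $\hat Z(W)=-2(Z\circ W)_{A_0}+2(Z\circ W)_{A_1}\in A_0(P_0)^\cn\oplus A_1(P_0)^\cn$.

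Consequently $\hat Z^2$ is block-diagonal, and $\hat Z^5$ yields a five-step alternation between the two factors, each step being a Jordan product with $Z$ followed by a Peirce projection. The Peirce multiplication rules $A_0\cdot A_0\subset A_0$, $A_{1/2}\cdot A_0\subset A_{1/2}$, $A_{1/2}\cdot A_{1/2}\subset A_0\oplus A_1$ organize the block pattern; using the Jordan identity $L_Z L_{Z\circ Z}=L_{Z\circ Z}L_Z$ together with \eqref{alternative}, each iterated expression reduces to a linear combination whose scalar coefficients are multiples of $xx^*+yy^*$ (or of $x^*x+y^*y$), so the isotropy hypothesis forces $\hat Z^5=0$.

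The main obstacle is the bookkeeping of non-associative octonionic products across five iterations. The key tool is that the alternator $[a,b,c]=(ab)c-a(bc)$ is totally antisymmetric and vanishes whenever two arguments coincide; combined with \eqref{alternative} this allows iterated reassociation of triple products and ultimately a collapse once the conjugate pairs $(x,x^*)$ and $(y,y^*)$ regroup, at which point the isotropy conditions finish the cancellation. The specific ceiling $5$ is moreover consistent with structural considerations: $\Oc P^2$ has restricted root system of type $BC_1$ with roots $\pm\alpha,\pm2\alpha$, so a nilpotent $Z\in\m^\cn$ sits in an $\sl_2$-triple whose neutral element has weights on $\h^0_3(\Oc)^\cn$ contained in $\{-2,-1,0,1,2\}$, yielding a Jordan block ceiling of $5$.
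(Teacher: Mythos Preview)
Your overall plan coincides with the paper's: normalise $P=\mathrm{diag}(1,0,0)$, use the Peirce decomposition to see that $\hat Z$ swaps $V_P$ and $V_P^\perp=A_{1/2}(P)$, and then iterate. In fact your block formulas are correct and, once you also compute $\hat Z(X_1)=2Z\circ X_1$ for $X_1\in A_1(P)$, they immediately give $\hat Z^2\big|_{A_{1/2}}=4\,L_Z^2$; hence $\hat Z^4\big|_{V_P^\perp}=16\,L_Z^4$, and the whole proposition reduces to $L_Z^4\big|_{A_{1/2}}=0$. This is exactly the reduction the paper makes.

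The gap is in how you finish that last step. Your assertion that the iterated expressions ``reduce to a linear combination whose scalar coefficients are multiples of $xx^*+yy^*$'' is not justified and, as stated, is too optimistic: already $Z^2$ has off-diagonal entries $x^*y$ and $y^*x$, which are genuine (complexified) octonions, not scalars, and which do not vanish under the isotropy hypothesis. The Jordan identity $L_ZL_{Z^2}=L_{Z^2}L_Z$ by itself is far too weak to force $L_Z^4=0$; you would need a full octonionic computation for a general $W\in A_{1/2}$, and you have not indicated how the cross-terms cancel. (It is true, and a good warm-up, that the alternative identities give $Z^3=0$ via $y(y^*x)=(yy^*)x$ etc., but this is not the same as $L_Z^4=0$.) The $BC_1$ remark at the end is a heuristic, not a proof: it presupposes that $\hat Z\in\f_4^\cn$ and then invokes Jacobson--Morozov and the weight structure, none of which you have established.

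What the paper does at this point, and what you are missing, is a second normalisation: since $\Spin(9)$ (the isotropy of $P$) acts transitively on real lines in $T_P\Oc P^2=A_{1/2}(P)$ and preserves both isotropy of $Z$ and the statement $L_Z^4(X)=0$, one may fix $X$ to be the single real vector with entries $x=1$, $y=0$; then $L_Z^4(X)=0$ is a short direct calculation using \eqref{alternative}. This trick converts an unwieldy two-variable octonionic identity into a one-variable one, and is the step that makes the proof go through.
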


\begin{proof} Without loss of generality we may again assume that $P=\mathrm{diag}(1,0,0)$. Since $\hat{Z}$ transforms vectors of $V_P$ in vectors of $V_P^\perp$ and vice-versa, it is enough to prove that $\hat{Z}^4=0$ when restricted to $V_P^\perp$. Up to multiplication by a constant, we have
\begin{equation*}
\begin{split}
\label{Z2} \hat{Z}^2(X)=&Z\circ(Z\circ X)-Z\circ(P\circ (Z\circ X))-Z\circ (Z\circ(P\circ X))\\
&-P\circ(Z\circ(Z\circ X))+P\circ (Z\circ (P\circ (Z\circ X)))\\
&+ P\circ (Z\circ (Z\circ (P\circ X)))
-Z\circ (P\circ (Z\circ X)))\\
& +Z\circ (P\circ (P\circ (Z\circ X)))+Z\circ (P\circ(Z\circ(P\circ X))).
\end{split}
\end{equation*}
This can be considerably simplified if we take $X\in V_P^\perp=A_{\frac12}(P)$. In view of
$$
A_{\frac12}(P)\circ A_{\frac12}(P)\subseteq A_{0}(P)\oplus A_{1}(P),\,\, A_{\frac12}(P)\circ ( A_{0}(P)\oplus A_{1}(P))\subseteq  A_{\frac12}(P),
$$
and recalling that $Z\in T_P^\cn\Oc P^2=A_{\frac12}(P)^\cn$, we see that, up to multiplication by a constant, $\hat{Z}^2(X)=Z\circ(Z\circ X)$ for $X\in V_P^\perp$. Hence
$$
\hat{Z}^4(X)=Z\circ(Z\circ (Z\circ (Z\circ X).
$$

So, it remains to prove that $L_Z^4(X)=0$, where $L_Z$ is the linear endomorphism of $\h_3(\Oc)$ given by the left multiplication by $Z$. Let us write
$$
Z=\left(
\begin{array}{ccc}
0 & x & y \\
x^* & 0 & 0 \\
y^* & 0 & 0 \\
\end{array}
\right)
$$
for some $x,y\in\Oc\otimes_\rn\cn$. It is easy to see that $Z$ is isotropic if and only if $xx^*+yy^*=0$. Since $\Spin(9)\subset F_4$ acts transitively on the space of real lines in $T_P\Oc P^2$, we can assume without loss of generality that
$$
X=\left(
\begin{array}{ccc}
0 & 1 & 0 \\
1 & 0 & 0 \\
0 & 0 & 0 \\
\end{array}
\right)
$$
Taking also \eqref{alternative} into account, a straightforward calculation now shows that $L_Z^4(X)=0$.
\end{proof}

\section{The twistor spaces of  $\Oc P^2$}\label{se:twistor}

Twistors first appeared in the context of theoretical physics, in particular in the work of Penrose, see for example \cite{PenroseRindler}.  For harmonic maps, twistor constructions can be found already in the works of Chern \cite{Chern} and Calabi \cite{Calabi}, and have since been used by many others as the field has developed. The main idea behind twistor constructions of harmonic maps into a Riemannian manifold $(N,h)$ is to find an (almost) complex manifold $(Z, J)$ and a fibration $\pi:Z\to N$ with the following property: for any Riemann surface $M$ and any (almost) holomorphic map
$$
\psi:M\to Z,
$$
the composition $\pi\circ\psi:M\to N$ is harmonic. If that is the case, then $\pi:(Z,J)\to(N,h)$ is called a \emph{twistor fibration} and $(Z,J)$ a \emph{twistor space} for $(N,h)$. Using twistor fibrations we can thus construct harmonic maps from (almost) holomorphic maps.

F.E. Burstall and J.H. Rawnsley introduced  a general twistor theory for harmonic maps into compact symmetric spaces \cite{BurstallRawnsley}. In this section we will briefly recall their construction, and then study this in the specific context of the compact symmetric space $\Oc P^2$. For notation regarding the weight spaces of the fundamental representation of $F_4$ we refer to Appendix \ref{ap:roots}.

\subsection{Harmonic maps into Lie groups and symmetric spaces}

A map $\varphi:(M,g)\to(N,h)$ between two Riemannian manifolds is said to be \emph{harmonic} if it satisfies the harmonic map equation
$$
\tr\nabla\d\varphi = 0,
$$
where $\nabla$ is the connection on $\varphi^{-1}TN\otimes T^*M$ induced by the Levi-Civita connections on $(M,g)$ and $(N,h)$ respectively. As is easily seen when $\dim M=2$, this equation depends only on the conformal structure on $M$. Thus, the concept of a harmonic map from a Riemann surface to a Riemannian manifold makes sense.

Let $G$ be a compact semi-simple Lie group equipped with a bi-invariant metric and $M$ a Riemann surface. For simplicity of exposition, we will assume from now on that $G$ is semi-simple, has trivial centre and is contained in $\U(n)$ for some $n$. For a map $\varphi:M\to G$, set
$$
A^\varphi=\frac12 \varphi^{-1}\d\varphi.
$$
For any local complex coordinate $z$ on $M$, we write
$$
A^\varphi=A_z^\varphi\d z + A_{\bar z}^\varphi\d\bar z.
$$
The \emph{integrability equation}
\begin{equation}\label{eq:integrability}
(A^\varphi_z)_{\bar{z}}-(A^\varphi_{\bar{z}})_z= 2[A^\varphi_z, A^\varphi_{\bar{z}}]
\end{equation}
follows easily from the Maurer-Cartan equation. Furthermore, it is easy to see that
$\varphi$ is harmonic if and only if
\begin{equation}\label{eq:harmonicity}
(A_z^\varphi)_{\bar z} + (A_{\bar z}^\varphi)_z = 0.
\end{equation}

Define the connection $D^\varphi=\d+A^\varphi$ on the trivial bundle $M\times\cn^n$. The condition for $\varphi$ being harmonic can now be written as
$$
D_{\bar z}^\varphi A_z^\varphi=  A_z^\varphi D_{\bar z}^\varphi,
$$
where $D_{\bar z}^\varphi=\partial_{\bar z} + A_{\bar z}^\varphi$. Thus the harmonicity of $\varphi$ is equivalent to $A^\varphi_z$ being a holomorphic endomorphism on the trivial bundle with respect to the holomorphic structure induced by $D^\varphi$.

As is well known, a compact inner symmetric space $G/K$ may be totally geodesically immersed in $G$ as a connected component of $\sqrt{e}=\{g\in G:\, g^2=e\}$.  When a harmonic map $\varphi:M\to G$ takes values in a connected component of  $\sqrt{e}$, we can write $\varphi=\pi_{\varphi}-\pi_{\varphi}^\perp$, where  $\pi_{\varphi}$ denotes the orthogonal projection onto the vector bundle, also denoted by $\varphi$, whose fiber at $z$ is the $(+1)$-eigenspace of $\varphi(z)$.

\subsection{General twistor theory.}\label{sec:twistor}

Let $\g$ be the Lie algebra of $G$ and $\t\subset\g$ be a maximal torus. Denote by $\Delta\subset\i\t^*$ the set of roots, where $\i=\sqrt{-1}$, and for $\alpha\in\Delta$ let $\g^\alpha$ be the corresponding root space. Fix a set of positive simple roots $\Phi^+=\{\alpha_1,\ldots,\alpha_n\}$ with dual basis $H_1,\ldots, H_n\in\t$ so that $\alpha_i(H_j)=\i\,\delta_{ij}$.

For any non-empty subset $I\subset\{1,\ldots,n\}$ consider the \emph{canonical element} $\xi_I=\sum_{i\in I}H_i$. From this we construct a parabolic subalgebra of $\g^\cn$ as follows: denote by $\g^{\xi_I}_j$ the $j$-eigenspace of $\ad(\xi_I/\i)$ and set
$$
\p_I=\sum_{j\geq0}\g^{\xi_I}_j.
$$
Let $P_I$ be the corresponding parabolic subgroup and $T_I=G^{\cn}/P_I=G/P_I\cap G$ the corresponding flag manifold.

The subset $I$ also defines the inner involution
$$
\tau_I=\Ad \exp(\pi\xi_I),
$$
which corresponds to the symmetric space $N_I=G/K$, where $P_I\cap G\subseteq K$. The corresponding  symmetric decomposition is
\begin{equation}\label{symdecomposition}
\g=\k\oplus \m,\qquad \k^\cn=\sum_{i\,\,\mathrm{even}}\g^{\xi_I}_i,\qquad \m^\cn=\sum_{i\,\,\mathrm{odd}} \g^{\xi_I}_i.
\end{equation}
Since $P_I\cap G\subset K$ we have a homogenous projection $p_I:T_I\to N_I$, which we refer to as a \emph{canonical fibration} \cite{BurstallRawnsley}. The horizontal and vertical spaces at the base point $x_0\in T_I$ (that we identify with the identity coset) of the canonical fibration are given by
\begin{equation}\label{verticalhorizontal}
\V^\cn(T_I)=\k^\cn\cap (\g_0^{\xi_I})^\perp,\qquad \H^\cn(T_I)=\m^\cn.
\end{equation}
Moreover, the flag manifold $T_I$ carries a complex structure $J_1$ with $(1,0)$-space at the base point $x_0$ given by $T_{J_1}^{1,0}T_I=\sum_{i>0}\g^{\xi_I}_i.$
By reversing the orientation of $J_1$  on the fibres, we obtain  another almost complex structure on $T_I$, usually denoted by $J_2$:
\begin{equation}\label{J210}
T_{J_2}^{1,0}T_I=\sum_{i\,\,\mathrm{even}<0}\g^{\xi_I}_i\oplus  \sum_{i\,\,\mathrm{odd}>0}\g^{\xi_I}_i.
\end{equation}
Thus the first summand on the right-hand side is $\V^{1,0}_{J_2}$ and the second summand is $\H^{1,0}_{J_2}$.

\begin{proposition}\label{prop:twistor}\cite{BurstallRawnsley}
The canonical homogenous map $p_I:T_I\to N_I$ is a twistor fibration with respect to the almost complex structure $J_2$.
\end{proposition}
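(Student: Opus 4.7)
My plan is to follow the standard argument of \cite{BurstallRawnsley}: both the $J_2$-holomorphicity of $\psi:M\to T_I$ and the harmonicity of $p_I\circ\psi:M\to N_I$ will be encoded in terms of the Maurer--Cartan form of a common local lift, and the former will be shown to imply the latter via a bookkeeping argument using the grading $\g^\cn=\bigoplus_i\g^{\xi_I}_i$. The key structural fact, which is the raison d'être of $J_2$ here, is that
\[
\Bigl[\sum_{i\text{ odd}>0}\g^{\xi_I}_i,\ \sum_{j\text{ odd}>0}\g^{\xi_I}_j\Bigr]\subset\sum_{k\text{ even}>0}\g^{\xi_I}_k\subset\k^\cn,
\]
since the sum of two odd integers is even.

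Given a $J_2$-holomorphic $\psi$, I would set $\varphi:=p_I\circ\psi$ and choose a local lift $F:U\subset M\to G$ of $\psi$, which is simultaneously a lift of $\varphi$ because $P_I\cap G\subset K$. Writing $\alpha:=F^{-1}\d F=\sum_i\alpha_i$ with $\alpha_i\in\Omega^1(U,\g^{\xi_I}_i)$ and decomposing by type $\alpha=\alpha^{(1,0)}+\alpha^{(0,1)}$, the description \eqref{J210} translates $J_2$-holomorphicity into the vanishings
\[
\alpha_i^{(1,0)}=0\text{ for }i>0\text{ even or }i<0\text{ odd},\qquad\alpha_i^{(0,1)}=0\text{ for }i<0\text{ even or }i>0\text{ odd}.
\]

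Next, I would invoke the standard reformulation of \eqref{eq:harmonicity} for maps into a symmetric space: harmonicity of $\varphi:M\to G/K$ is equivalent to the Koszul--Malgrange condition that $\alpha^{(1,0)}_\m$ be holomorphic with respect to the operator $\bar\partial+\ad(\alpha^{(0,1)}_\k)$. To verify it, I would project the Maurer--Cartan identity $\d\alpha+\tfrac{1}{2}[\alpha\wedge\alpha]=0$ onto the $\m^\cn$-component at bidegree $(1,1)$; using $[\m,\m]\subset\k$, this gives
\[
\bar\partial\alpha^{(1,0)}_\m+\partial\alpha^{(0,1)}_\m+[\alpha^{(1,0)}_\k\wedge\alpha^{(0,1)}_\m]+[\alpha^{(1,0)}_\m\wedge\alpha^{(0,1)}_\k]=0.
\]
Grading along $\xi_I$, for each odd $i>0$ the $J_2$-vanishings force $\partial\alpha_i^{(0,1)}=0$ and annihilate the $i$-component of $[\alpha^{(1,0)}_\k\wedge\alpha^{(0,1)}_\m]$ (whose support lies in the odd negative indices); summing over odd $i>0$ then recovers the Koszul--Malgrange equation, and the analogous analysis for $i<0$ odd yields its complex conjugate.

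The main obstacle will be the combinatorial bookkeeping: within each graded piece of the $(1,1)$-Maurer--Cartan identity, one must systematically enumerate the bracket pairings $(j,k)$ with $j+k=i$ compatible with the $J_2$ support restrictions, and confirm that the surviving ones precisely assemble the harmonicity equation. Once this indexing is pinned down, no further analysis is required and $p_I:(T_I,J_2)\to N_I$ is a twistor fibration.
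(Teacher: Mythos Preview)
The paper does not supply its own proof of this proposition: it is stated with a citation to \cite{BurstallRawnsley} and no argument is given. Your proposal reproduces the standard Burstall--Rawnsley argument and is correct; the bookkeeping you outline goes through exactly as you describe, since under the $J_2$-constraints $\alpha^{(1,0)}_\k$ is supported in even non-positive degrees and $\alpha^{(0,1)}_\m$ in odd negative degrees, so their bracket has no component in odd positive degree, while $\alpha^{(0,1)}_i=0$ for odd $i>0$ kills $\partial\alpha^{(0,1)}_\m$ there.
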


For later use we also note that that $P_I\cap G$ is precisely the fixed set for the automorphism
\begin{equation*}\label{sigmaI}
\sigma_I=\Ad \exp(\frac{2\pi}{k}\xi_I)
\end{equation*}
of order $k$, where
$$
k=\max\{\alpha(\xi_I)/\i :\, \alpha\in\Phi^+\} + 1.
$$
Hence the flag manifold $T_I$ also carries a structure of a $k$-symmetric space. Let $\omega$ be the primitive $k$-th root of unity and for each $j=0,\dots,k-1$ let
\begin{equation}\label{gj}
\g^j_{\xi_I}=\sum_{i=j~\mathrm{mod}~k}\g_i^{\xi_I}
\end{equation}
be the eigenspace of $\sigma_I$ with eigenvalue $\omega^j$.

\subsection{The canonical fibrations of $\Oc P^2$.}\label{Sec:canonicalprojections}

We are now in a position to study the canonical projections for the Cayley plane. We follow the notation set out in Appendix \ref{ap:roots}. It is easy to see that there are exactly three flag manifolds fibering canonically over $\Oc P^2$ \cite[\S 7]{CP2015}, corresponding to the following subsets of simple roots:
\begin{equation}\label{eq:I}
I=\{\alpha_3\},\quad I=\{\alpha_4\},\quad I=\{\alpha_3, \alpha_4\}.
\end{equation}
We denote the corresponding flag manifolds $T_3$, $T_4$ and $T_{34}$, respectively. It is easy to see that
\begin{equation*}
\begin{split}
T_3=&F_4/(\SU(3)\times\U(1)\times\SU(2))\\
T_4=&F_4/(\Spin(7)\times\U(1))\\
T_{34}=&F_4/(\SU(3)\times\U(1)\times\U(1)).
\end{split}
\end{equation*}
%
These spaces may be interpreted geometrically as follows.

\begin{proposition}\label{P4}(\cite{LM}, Proposition 6.6) The flag manifold $T_4$ is the space of all lines in $\h_3^0(\Oc)^\cn$ that square to zero, i.e.,
$$
T_4=\{\ell \in \P(\h_3^0(\Oc)^\cn):\, \ell^2=0\}.
$$
\end{proposition}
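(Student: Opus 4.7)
The plan is to identify $T_4$ with the $F_4$-orbit of a highest weight line in $\P(V)$, where $V := \h^0_3(\Oc)^\cn$ is the $26$-dimensional fundamental representation of $F_4$, and then to verify that this orbit coincides with $\N := \{\ell \in \P(V) : \ell^2 = 0\}$. Note that $\N$ is a closed $F_4$-invariant subvariety, since $F_4$ preserves both $V$ and the Jordan product.

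The first step is to exhibit an explicit highest weight line and check it lies in $\N$. Using the weight decomposition of $V$ with respect to the canonical element $\xi_4 = H_4$ recorded in Appendix \ref{ap:roots}, pick a nonzero vector $X_0$ in the one-dimensional highest weight space of $V$. By construction of the parabolic $P_{\{\alpha_4\}}$, the stabilizer in $F_4$ of the line $[X_0] \in \P(V)$ equals $P_{\{\alpha_4\}} \cap F_4 = \Spin(7) \times \U(1)$, so $F_4 \cdot [X_0] \cong T_4$. After a suitable choice of base point $P_0 = \mathrm{diag}(1,0,0)$ and compatible positive root system, the highest weight of $V$ is $\omega_4$, and a straightforward check of the $\Spin(9)$-decomposition $V = 1\oplus 9\oplus 16$ shows that a highest weight vector sits inside the $9$-dimensional $\Spin(9)$-vector subrepresentation $V\cap A_0(P_0)^\cn$; it can be realised concretely as
$$
X_0 = \begin{pmatrix} 0 & 0 & 0 \\ 0 & 0 & z_0 \\ 0 & z_0^* & 0 \end{pmatrix}
$$
for any nonzero null element $z_0 \in \Oc^\cn$, $z_0 z_0^* = 0$. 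A direct computation then gives $X_0^2 = 0$: the off-diagonal entries of $X_0^2$ vanish identically, and the diagonal entries are proportional to $z_0 z_0^* = z_0^* z_0 = 0$.

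The remaining step is to prove $F_4 \cdot [X_0] = \N$, and I expect this to be the main obstacle. A naive dimension count requires irreducibility of $\N$, which is not transparent from the many (highly redundant) scalar relations imposed by $X^2 = 0$ in the $27$-dimensional target $\h_3(\Oc)^\cn$. A cleaner route is to appeal to the classification of nilpotent $F_4$-orbits acting on $V$, in the same spirit as the adjoint nilpotent orbit classification collected in Appendix \ref{ap:nilpotent}: the condition $X^2 = 0$ singles out the closure of the minimal nonzero nilpotent $F_4$-orbit in $V$, and its projectivization is precisely the closed orbit $F_4 \cdot [X_0]$. Combining these steps yields the desired identification $T_4 \cong \N$.
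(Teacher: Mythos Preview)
The paper does not supply a proof of this proposition; it is quoted directly from Landsberg--Manivel \cite{LM}, Proposition~6.6, so there is no argument in the paper to compare yours against.

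On its own merits, your first two steps are sound. The highest-weight line $W_{12}$ is stabilised by $P_4\cap F_4$ (this is exactly how the paper chooses the base point in Section~\ref{Sec:canonicalprojections}), so $T_4\cong F_4\cdot[W_{12}]$, and the multiplication table in Appendix~\ref{ap:roots} already records $W_{12}\cdot W_{12}=0$; your concrete matrix realisation with a null $z_0\in\Oc^\cn$ is also correct and gives the same conclusion. Hence $T_4\subseteq\N$.

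The genuine gap is the reverse inclusion. Your invocation of a ``classification of nilpotent $F_4$-orbits acting on $V$'' does not name an existing result: the theory in Appendix~\ref{ap:nilpotent} is for the \emph{adjoint} representation, and there is no ready-made analogue for the $26$-dimensional module. The sentence ``$X^2=0$ singles out the closure of the minimal nonzero nilpotent $F_4$-orbit in $V$'' is precisely the statement to be proved, not an input you can cite. What is actually required is either a direct transitivity argument---in Jordan-algebraic terms, the nonzero $X\in\h_3(\Oc)^\cn$ with $X^2=0$ and $\tr X=0$ are exactly the trace-zero rank-one elements, and one then shows that $F_4^\cn$ acts transitively on rank-one lines (this is essentially the route taken in \cite{LM})---or else an irreducibility-plus-dimension count for the scheme cut out by the $27$ quadrics $X^2=0$ inside $\P^{25}$, which you yourself flag as non-obvious. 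As written, step~3 is a restatement of the goal rather than a proof of it.
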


It is easy so see that any $\ell\in T_4$ is isotropic, so that $T_4$ is contained in the quadric of isotropic lines in $\h_3^0(\Oc)^\cn$. 

\begin{proposition}\label{P3}(\cite{LM}, Proposition 6.7)
The flag manifold $T_3$ is the space of two-dimensional isotropic subspaces $\C\subset \h_3^0(\Oc)^\cn$ satisfying $\C^2=0$  (with respect to the usual matrix product).
\end{proposition}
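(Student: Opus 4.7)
The plan is to realise $T_3$ as an $F_4$-orbit inside the Grassmannian of $2$-planes of $\h_3^0(\Oc)^\cn$, paralleling Proposition \ref{P4} where $T_4$ is realised as an $F_4$-orbit inside $\P(\h_3^0(\Oc)^\cn)$. Let $\mathcal{T}$ denote the set of $2$-dimensional subspaces $\C\subset\h_3^0(\Oc)^\cn$ with $v^2=0$ for every $v\in\C$. By polarisation, this is equivalent to $v\circ w=0$ for all $v,w\in\C$, and the defining condition of $\mathcal{T}$ is therefore preserved by the Jordan-automorphism group $F_4$; the isotropy condition of the statement is automatic since $\langle v,w\rangle_\cn=\frac12\tr(v\circ w)=0$.

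Next I would produce an explicit basepoint $\C_0\in\mathcal{T}$ corresponding to the identity coset of $T_3$. The canonical element for $I=\{\alpha_3\}$ is $\xi_I=H_3$, and the weight data of Appendix \ref{ap:roots} allow me to decompose the fundamental representation $\h_3^0(\Oc)^\cn$ into eigenspaces of $\xi_I$ acting via this representation. A direct inspection shows that the top eigenvalue is attained on exactly two weights, so the top eigenspace is $2$-dimensional, and I take it as $\C_0$. For $v,w\in\C_0$, the product $v\circ w\in\h_3(\Oc)^\cn=\rn I\oplus\h_3^0(\Oc)^\cn$ is a $\xi_I$-eigenvector whose eigenvalue is twice the maximum, and this exceeds every eigenvalue occurring on either summand of the right-hand side; hence $v\circ w=0$ and $\C_0\in\mathcal{T}$.

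The assignment $gH\mapsto g\cdot\C_0$, where $H=P_I\cap F_4\cong\SU(3)\times\U(1)\times\SU(2)$ is the isotropy of $T_3$, then gives a well-defined $F_4$-equivariant map $\chi\colon T_3\to\mathcal{T}$. To establish bijectivity I would identify the stabiliser of $\C_0$ in $F_4$ with $H$: the centraliser of $H_3$ evidently preserves $\C_0$, while any element of $\f_4$ stabilising $\C_0$ must commute with $H_3$ (its negative-weight components would otherwise send some vector of $\C_0$ into a strictly lower eigenspace not contained in $\C_0$, and positive-weight components act trivially on $\C_0$ by maximality). Transitivity, and hence surjectivity of $\chi$, then follows from a dimension count combined with the fact that $\mathcal{T}$ is a closed irreducible $F_4$-invariant subvariety of the Grassmannian of $2$-planes.

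The main technical obstacle is the weight-space bookkeeping: verifying that the top $\xi_I$-eigenvalue on $\h_3^0(\Oc)^\cn$ has multiplicity exactly $2$, and that its double exceeds every eigenvalue on $\h_3(\Oc)^\cn$. These are explicit calculations with the weight structure of the $26$-dimensional fundamental representation of $F_4$, as collected in Appendix \ref{ap:roots}, and they represent the bulk of the computational work.
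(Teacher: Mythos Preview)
The paper does not give its own proof of this proposition; it simply cites \cite{LM}, Proposition 6.7, and moves on. So there is nothing to compare your argument against at the level of the paper itself.

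On the merits of your sketch: the construction of the basepoint $\C_0$ as the top $H_3$-eigenspace is exactly what the paper uses later (it takes $\C_0=W_{11}\oplus W_{12}$, where $\rho(H_3)$ has eigenvalue $3\mathrm{i}$), and your weight argument for $\C_0\cdot\C_0=0$ is correct. Your identification of the stabiliser is essentially right, though the cleanest way to phrase it is that the stabiliser of $\C_0$ in $F_4^\cn$ is a parabolic containing $P_3$, and since $P_3$ is maximal and the representation is irreducible, the stabiliser equals $P_3$; intersecting with the compact form gives $H$.

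The genuine gap is in surjectivity. You assert that $\mathcal{T}$ is irreducible and invoke a dimension count, but neither fact is established: irreducibility of the variety of $2$-planes annihilated by the Jordan product is not automatic (such varieties can and do have multiple components in other settings), and you give no independent computation of $\dim\mathcal{T}$ to compare with $\dim T_3=40$. Without one of these, the argument that the compact orbit $F_4\cdot\C_0$ exhausts $\mathcal{T}$ does not go through. This is precisely the substantive content of the Landsberg--Manivel result being cited, so it is not surprising that it requires real work; but as written your proposal defers rather than resolves it.
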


\begin{proposition} The flag manifold $T_{34}$ is the space of all pairs $(\ell,\C)$, where $\ell\in T_4$, $\C\in T_3$ and $\ell\subset\C$.
\end{proposition}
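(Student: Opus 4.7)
My plan is to identify $T_{34}$ with the incidence variety $\mathcal{I}=\{(\ell,\C)\in T_4\times T_3:\ell\subset\C\}$ by constructing an $F_4$-equivariant embedding $\iota:T_{34}\hookrightarrow T_3\times T_4$, verifying the incidence at the base point, and concluding by a dimension count.

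For the embedding, I would observe that $\xi_{\{\alpha_3,\alpha_4\}}=H_3+H_4$, so $P_{34}\cap F_4$ is the centralizer of $H_3+H_4$ in $F_4$. For any root $\alpha$ of $F_4$, the condition $\alpha(H_3+H_4)=0$ forces $\alpha(H_3)=\alpha(H_4)=0$, because the simple-root coefficients of any root share a common sign. Hence $P_{34}\cap F_4$ and $(P_3\cap F_4)\cap(P_4\cap F_4)$ have the same Lie algebra; as centralizers of torus elements in the compact simply connected group $F_4$ both subgroups are connected, so the groups themselves agree. Taking the product of the resulting canonical projections $T_{34}\to T_3$ and $T_{34}\to T_4$ yields the desired $F_4$-equivariant injection $\iota$.

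To verify $\iota(T_{34})\subseteq\mathcal{I}$, I would work at the base point. The fiber of $T_{34}\to T_3$ is the homogeneous space $(\SU(3)\times\U(1)\times\SU(2))/(\SU(3)\times\U(1)\times\U(1))\cong\P^1$. The base 2-plane $\C_0\in T_3$ is a specific $P_3$-invariant 2-dimensional subspace of $V=\h_3^0(\Oc)^\cn$ which contains the highest weight line $\ell_0\in T_4$ of $V$. Since $\iota$ is injective the $\SU(2)$-factor cannot act trivially on $\C_0$, and as the only $2$-dimensional irreducible $\SU(2)$-representation is the standard one, the $\SU(2)$-orbit of $\ell_0$ fills $\P(\C_0)$. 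By $F_4$-equivariance, every fiber of $T_{34}\to T_3$ over $\C\in T_3$ maps into $\P(\C)$; and any $\ell\subset\C\in T_3$ satisfies $\ell\cdot\ell\subseteq\C\cdot\C=0$, so $\ell\in T_4$ by Proposition \ref{P4}.

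Equality $\iota(T_{34})=\mathcal{I}$ then follows from a dimension count: $\mathcal{I}$ fibers over $T_3$ (real dimension $40$) with $\P^1$-fibers, giving $\dim\mathcal{I}=42=\dim T_{34}$, and as $\mathcal{I}$ is smooth and connected while $T_{34}$ is a closed connected submanifold of the same dimension, the two coincide. The main obstacle is verifying that $\C_0$ contains $\ell_0$; this ultimately rests on the weight-space structure of the 26-dimensional fundamental representation of $F_4$ recalled in Appendix \ref{ap:roots}.
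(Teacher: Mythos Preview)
Your proof is correct and follows essentially the same line as the paper's: both hinge on the fact that the $\SU(2)$-factor of the isotropy group $\SU(3)\times\U(1)\times\SU(2)$ of $T_3$ acts on the base $2$-plane $\C_0$ by the standard $2$-dimensional representation, hence transitively on $\P(\C_0)$. The paper states this directly and reads off the stabilizer of a pair $(\ell,\C)$ as $\SU(3)\times\U(1)\times\U(1)$, whereas you recover the same fact from the injectivity of your equivariant map $\iota$ and then conclude by a dimension count---a count that is in fact already subsumed by your observation that the $\SU(2)$-orbit fills $\P(\C_0)$, which together with the transitivity of $F_4$ on $T_3$ gives surjectivity of $\iota$ onto $\mathcal{I}$ directly.
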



\begin{proof} Given such a pair $(\ell,\C)$, the stabilizer of this pair in $F_4$ is the stabilizer of $\ell$ in the stabilizer of $\C$. The latter is $\SU(3)\times\U(1)\times\SU(2)$ where the first factor acts trivially on $\C$ and the third factor acts by the standard $2$-dimensional representation on $\C$. Since this is transitive on the lines in $\C$ with stabilizer $\U(1)$, we conclude that $F_4$ is transitive on such pairs with stabilizer $\SU(3)\times\U(1)\times\U(1)$.
\end{proof}

We may also embed $\Oc P^2$ as the connected component of $\sqrt{e}=\{g\in F_4 :\, g^2=e\}$ containing $\exp(\pi\xi_I)$, where $I$ is one of the sets in \eqref{eq:I}. This is a totally geodesic embedding of $\Oc P^2$ into $F_4$. Similarly, $T_I$ can be identified with the connected component of $\sqrt[k]{e}=\{g\in F_4 :\ g^k=e\}$ containing the element
$$
\exp\big(\frac{2\pi}{k}\xi_I\big).
$$
The Lie group $F_4$ acts on both the images of $\Oc P^2$ and $T_I$ by conjugation, and
$$
p_I\big(\exp \big(\frac{2\pi}{k} \xi_I\big)\big)=\exp (\pi \xi_I).
$$
This can be made more explicit via the fundamental representation, henceforth denoted by $\rho$, of $F_4$ and $\f_4$ (see Appendix \ref{ap:roots}). Recall the isometry $Gr^a\cong\Oc P^2$ from Theorem \ref{grass}. The subspace in $Gr^a$ corresponding to $\exp(\pi\xi_I)$ is exactly the $(+1)$-eigenspace of $\rho(\exp(\pi\xi_I)$. Denoting this space by $V$ we have
$$
\rho(\exp(\pi\xi_I))=\pi_V-\pi_{V^\perp},
$$
where $\pi_W$ denotes orthogonal projection onto a subspace $W$.

Let us first consider  the $3$-symmetric space $T_4$, the flag manifold of isotropic lines $\ell$ in $\h^0_3(\Oc)^\cn$ satisfying $\ell^2=0$. The parabolic group $P_4$ stabilizes  the isotropic line $\ell= W_{12}$ which we thus choose as our base point. We now have
$$
\rho\left(H_4\right)=\begin{cases} 0, & \text{on}\, W_0\oplus W_1\oplus W_2\oplus W_3\\ ~\mathrm{i}, & \text{on}\, W_4\oplus W_5\oplus W_6\oplus W_7\oplus W_8\oplus W_9\oplus W_{10}\oplus W_{11}\\ 2\mathrm{i}, & \text{on}\,W_{12} \end{cases}.
$$
It follows from the multiplication table in Appendix \ref{ap:roots} that
$$
\ell_s=\sum_{i\geq -3} W_i
$$
is the \emph{stabilizer} of $\ell$, i.e., the maximal subspace satisfying $\ell_s\cdot \ell\subseteq \ell$. The $(+1)$-eigenspace of $\rho(\exp(\pi H_4))$ is  $\ell\oplus\bar\ell\oplus (\ell_s\cap \overline{\ell}_s)$ and we thus conclude that the projection $p_4:T_4\to \Oc P^2$ is given by
\begin{equation}\label{p4}
p_4(\ell)=\ell\oplus \overline \ell \oplus (\ell_s\cap \overline{\ell}_s)\qquad(\ell\in T_4).
\end{equation}

We now consider the $4$-symmetric space $T_3$. The parabolic group $P_3$ stabilizes  the isotropic $2$-dimensional subspace $\C=W_{11}\oplus W_{12}$ which thus corresponds to our base point. We have
$$
\rho\left(H_3\right)=\begin{cases} 0,&\text{on}\, W_0\oplus W_4\\ ~\mathrm{i},&\text{on}\, W_1\oplus W_2\oplus W_3\oplus W_5\oplus W_6\oplus W_7\\ 2\mathrm{i},& \text{on}\, W_8\oplus W_9\oplus W_{10}\\ 3\mathrm{i},&\text{on}\, W_{11}\oplus W_{12} \end{cases}.
$$
Observe that the stabilizer $\C_s$ and the \emph{annihilator} of $\C$, i.e., the maximal subspace $\C_a$ satisfying $\C_a\cdot\C=0$, are given by
$$
\C_s=W_{-4}\oplus\sum_{i\geq 0} W_i\quad \text{and}\quad \C_a=\sum_{i >0, i\neq 4} W_i,
$$
respectively. We also have $\C_a^2=\sum_{i\geq 8}W_i$ and we thus conclude that the projection $p_3:T_3\to \Oc P^2$ is given by
\begin{equation}\label{p3projection}
p_3(\C)=(\C_a^2\cap \C^\perp)\oplus(\C_s\cap \overline{\C}_s)\oplus \overline{(\C_a^2\cap \C^\perp)}\qquad(\C\in T_3).
\end{equation}

The flag manifold $T_{34}$ is a $6$-symmetric space and the parabolic group $P_3\cap P_4=P_{34}$ stablizes the pair $(\ell,\C)=(W_{12},W_{11}\oplus W_{12})$ which we thus take as our base point. We have
 $$
 \rho\left(H_3+H_4\right)=\begin{cases} 0,&\text{on}\, W_0\\ ~\mathrm{i},&\text{on}\, W_1\oplus W_2\oplus W_3\oplus W_4\\ 2\mathrm{i},&\text{on}\, W_5\oplus W_6\oplus W_7\\ 3\mathrm{i},&\text{on}\, W_8\oplus W_9\oplus W_{10}\\ 4\mathrm{i},&\text{on}\, W_{11}\\ 5\mathrm{i},&\text{on}\, W_{12} \end{cases}.
 $$
 Let $\C_\ell$ be the maximal subspaces satisfying $\C_\ell\cdot \C\subseteq \ell$, so that
 $$
 \C_\ell=\sum_{i\geq 1}W_i,\ \C_\ell^2=\sum_{i\geq 5}W_i\ \text{and}\ \C_\ell^3=\sum_{i\geq 8}W_i.
 $$
By identifying the $(+1)$-eigenspace of $\rho(\exp(\pi (H_3+H_4))$ we thus conclude that the projection $p_{34}:T_3\to \Oc P^2$ is given by
\begin{multline}\label{p34projection}
p_{34}(\ell,\C)=(\C\cap \ell^\perp)\oplus (\C_\ell^2\cap{\C_\ell^3}^\perp)\oplus (\overline{\C_\ell}^\perp\cap \C_\ell^\perp)\\
\oplus \overline{(\C\cap \ell^\perp)\oplus (\C_\ell^2\cap{\C_\ell^3}^\perp)}\qquad((\ell,\C)\in T_{34}).
\end{multline}

 \subsection{Twistor lifts}\label{subse:harmonic}

In this section we investigate the condition of a map into one of the $T_I$ to be holomorphic with respect to the almost complex structure $J_2$ introduced in the previous section. Using Proposition \ref{prop:twistor} this will then imply harmonicity of the resulting map into $\Oc P^2$. To avoid cumbersome notation,  we will denote the trivial bundle $M\times\h_3^0(\Oc)^\cn$ by just $\h_3^0(\Oc)^\cn$.

\begin{lemma}
Let $\psi:M \to T_I=F_4^\cn/P_I$ be a smooth map and set $\varphi=p_I\circ \psi:M\to\Oc P^2$. Then $\psi$ is $J_2$-holomorphic (so that $\varphi$ is harmonic) if and only if:
\begin{enumerate}
\item[(i)] when $I=\{3\}$,  $\psi=\C$ is a rank 2 holomorphic subbundle of $\h^0_3(\Oc)^\cn$ with respect to  $D^\varphi_{\bar z}$, contained in $\ker A^{\varphi}_z$.
\item[(ii)] when $I=\{4\}$, $\psi=\ell$ is a holomorphic line subbundle of $\h^0_3(\Oc)^\cn$ with respect to  $D^\varphi_{\bar z}$, contained in $\ker A^{\varphi}_z$.
\item[(iii)] when $I=\{3,4\}$, $\psi=(\ell,\C)$, $\ell$ and $\C$ are holomorphic subbundles of $\h^0_3(\Oc)^\cn$ with respect to $D^\varphi_{\bar z}$, with $\ell$ contained in $\ker A^{\varphi}_z$ and $A_z^\varphi(\C)\subset \ell$.
\end{enumerate}
\end{lemma}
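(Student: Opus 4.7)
The plan is to translate $J_2$-holomorphicity of $\psi$ into vanishing of specific components of the Maurer--Cartan form of a local smooth lift $F:M\to F_4$ with $\psi=F\cdot x_0$, and then to re-interpret these as the stated subbundle conditions via the fundamental representation. Writing $\omega=F^{-1}\d F$ and decomposing $\omega_{\bar z}=\sum_j\omega^j_{\bar z}$ with $\omega^j_{\bar z}\in\g^{\xi_I}_j$, the identification $T_{x_0}^\cn T_I\cong\sum_{i\neq0}\g^{\xi_I}_i$ combined with \eqref{J210} shows that $\psi$ is $J_2$-holomorphic if and only if $\omega^j_{\bar z}=0$ for every $j<0$ even and every $j>0$ odd. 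This gives the sets $j\in\{-2,1\}$ for $I=\{4\}$, $j\in\{-2,1,3\}$ for $I=\{3\}$, and $j\in\{-4,-2,1,3,5\}$ for $I=\{3,4\}$.

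The second step is the computational identity
\begin{equation*}
A^\varphi=-\Ad(F)\,\omega^{\mathrm{odd}},\qquad D^\varphi(\rho(F)\tilde s)=\rho(F)\bigl(\d\tilde s+\rho(\omega^{\mathrm{even}})\tilde s\bigr),
\end{equation*}
obtained from $\varphi=p_I\circ\psi=F\exp(\pi\xi_I)F^{-1}$ and the fact that $\Ad\exp(-\pi\xi_I)$ acts as $(-1)^j$ on $\g^{\xi_I}_j$. Together with the reality relation $\omega^j_{\bar z}=\overline{\omega^{-j}_z}$, this rephrases the vanishing of the $\omega^j_{\bar z}$ above as vanishing of specific components of $\omega^{\mathrm{even}}_{\bar z}$ and $\omega^{\mathrm{odd}}_z$.

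Pick a flat frame $w_i$ of each weight space $W_i$, so that $\ell=\rho(F)W_{12}$ and $\C=\rho(F)(W_{11}\oplus W_{12})$. Consulting the $\rho(\xi_I)$-eigenvalue tables of Section~\ref{Sec:canonicalprojections}, each $\rho(\omega^j)w_i$ lands in a prescribed eigenspace, possibly zero. For case~(ii): $\rho(\omega^{-2}_{\bar z})w_{12}$ lies in the $0$-eigenspace of $\rho(H_4)$, while $\rho(\omega^{-1}_z)w_{12}$ lies in the $\i$-eigenspace, and both are disjoint from $\ell$; thus $D^\varphi_{\bar z}\Gamma(\ell)\subseteq\Gamma(\ell)$ translates to $\omega^{-2}_{\bar z}=0$, and $\ell\subseteq\ker A^\varphi_z$ translates to $\omega^{-1}_z=0$, equivalently $\omega^1_{\bar z}=0$. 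An analogous bookkeeping for cases~(i) and~(iii) reproduces exactly the vanishing lists of the first paragraph, with the additional condition $A^\varphi_z(\C)\subseteq\ell$ in case~(iii) being automatic once $\ell\subseteq\ker A^\varphi_z$ holds.

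The main technical point is the converse in this translation: that $\rho(\omega^j)w_i=0$ on the relevant weight vectors forces the Lie algebra element $\omega^j\in\g^{\xi_I}_j$ itself to vanish. Since $W_{12}$ is the line of highest $\xi_I$-weight and $\rho$ is the faithful fundamental representation of $F_4$, this reduces to non-degeneracy of each relevant root vector's action on $w_{11}$ or $w_{12}$, which one reads off from the bracket relations collected in Appendix~\ref{ap:roots}.
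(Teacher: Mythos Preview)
Your moving-frame approach is essentially the same strategy as the paper's pointwise argument: both reduce $J_2$-holomorphicity to the vanishing of certain graded components of the Maurer--Cartan form and then identify these with the stated bundle-theoretic conditions via the action of $\f_4$ on the weight spaces of the fundamental representation. The formulae $A^\varphi=-\Ad(F)\omega^{\mathrm{odd}}$ and $D^\varphi(\rho(F)\tilde s)=\rho(F)(\d\tilde s+\rho(\omega^{\mathrm{even}})\tilde s)$ are correct and make the translation clean.

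There are, however, two errors in the execution. First, your enumeration of the required vanishing indices is incomplete: for $I=\{3\}$ the eigenvalues of $\ad(\xi_3/\i)$ range over $\{-4,\dots,4\}$, so $j=-4$ must also appear; for $I=\{3,4\}$ they range over $\{-6,\dots,6\}$, so $j=-6$ must appear. This is harmless for the final equivalence, since the holomorphicity of $\C$ (respectively of $\ell$) with respect to $D^\varphi_{\bar z}$ automatically kills these top components when you run the weight-space bookkeeping; but your stated lists do not match what $J_2$-holomorphicity actually requires.

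The second error is a genuine gap. Your claim that ``$A^\varphi_z(\C)\subseteq\ell$ is automatic once $\ell\subseteq\ker A^\varphi_z$'' in case~(iii) is false, and without this condition the equivalence breaks. Concretely, $\g^{\xi_{34}}_{-1}$ is four-dimensional, spanned by the root vectors for $-\alpha_3,-\alpha_4,-(\alpha_2+\alpha_3),-(\alpha_1+\alpha_2+\alpha_3)$. Of these, only $X_{-\alpha_4}$ acts nontrivially on $w_{12}$ (the other three send $w_{12}$ to long-root directions, which are not weights of the fundamental representation). Hence $\ell\subseteq\ker A^\varphi_z$ forces only the $-\alpha_4$-component of $\omega^{-1}_z$ to vanish. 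The remaining three components are detected precisely by their action on $w_{11}$, which sends it to $w_{10},w_9,w_8$ respectively; this is exactly what the condition $A^\varphi_z(\C)\subseteq\ell$ encodes. So in case~(iii) that condition is not redundant but essential for obtaining $\omega^{1}_{\bar z}=0$.
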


\begin{proof}We will prove the lemma for  $I=\{4\}$, the remaining cases are similar.

Assume first that $\psi=\ell$ is $J_2$-holomorphic. Let us fix a  point $z_0\in M$ and, without loss of generality, assume that $\psi(z_0)=W_{12}$. In view of \eqref{verticalhorizontal} and \eqref{J210}, it is clear that $\rho(\H_{J_2}^{1,0})W_{12}=0$. We also have $\rho(\V_{J_2}^{1,0})(\varphi(z_0)\cap W_{12}^\perp)\perp W_{12}$, because $\V_{J_2}^{1,0}$ is a direct sum of negative root spaces.

The condition $\rho(\H_{J_2}^{1,0})W_{12}=0$ means that $\psi=\ell$ lies in $\ker A^{\varphi}_z$ at $z_0$, while the condition $\rho(\V_{J_2}^{1,0})(\varphi(z_0)\cap W_{12}^\perp)\perp W_{12}$  means that $D^\varphi_{\bar z}(\ell)\subset\ell$. Consequently $\ell$ is a holomorphic line bundle with respect to $D_{\bar{z}}^\varphi$ contained in $\ker A^{\varphi}_z$.

For the converse, we can use the Lie theoretic description  of the fundamental representation of $F_4$  (see Appendix \ref{ap:roots}) to check the following: (a) for each nonzero $ X\in  \H_{J_2}^{0,1}$, $\rho(X)W_{12}\neq 0 $; (b) for each nonzero $X\in  \V_{J_2}^{0,1}$,  $\rho(X)(\varphi(z_0)\cap W_{12}^\perp)=W_{12}$. From (a)  we see that if $\ell$ lies in $\ker A^{\varphi}_z$, then
the component of $\ell_z$ along $\H_{J_2}^{0,1}$ must vanish everywhere. From (b) we see that if $\ell$ is a holomorphic line subbundle of $\h^0_3(\Oc)^\cn$  with respect to  $D^\varphi_{\bar z}$, then  the component of $\ell_z$ along $\V_{J_2}^{0,1}$ must vanish everywhere.
\end{proof}

\section{Nilconformal Harmonic Maps}\label{se:nilconformal}

Recall that a harmonic map $\varphi:M\to G\subset \U(n)$ is said to be \emph{nilconformal} if $A^\varphi_z$ is a nilpotent element of the Lie algebra $\g^\cn$ at each point. It follows from the holomorphicity of $A^\varphi_z$ that any harmonic map $\varphi:S^2\to G$ is nilconformal. By Engel's theorem we know that $A^\varphi_z$  is also nilpotent at each point as a complex linear endomorphism of  $\cn^n$.

The complex algebraic variety of nilpotent elements of $\g^\cn$, which we denote by $\N(\g^\cn)$,  decomposes into a disjoint union of conjugacy classes $\O_X$ of nilpotent elements  under the adjoint action of $G^\cn$ (the \emph{nilpotent $G^\cn$-orbits}). A basic fact from the theory of algebraic groups is that each orbit $\O_X$ is open in its Zariski closure $\overline{\O_X}$, and the latter is a complex algebraic subvariety. Since $A^\varphi_z$ is holomorphic and $M$ has complex dimension $1$, we have the following result.
\begin{theorem} Let $\varphi:M\to G$ be  a nilconformal harmonic map. Then, off a discrete subset of $M$, $A^\varphi_z$ takes values in a single nilpotent $G^\cn$-orbit $\O_X$.
\end{theorem}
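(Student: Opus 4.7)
The plan is to exploit three facts: the nilpotent cone $\N(\g^\cn)$ has only finitely many $G^\cn$-orbits, each orbit $\O_X$ is open in its Zariski closure $\overline{\O_X}$ (so the boundary $\overline{\O_X}\setminus\O_X$ is a closed algebraic subvariety of strictly smaller dimension, decomposing as a finite union of lower-dimensional orbits), and $A^\varphi_z:M\to\g^\cn$ is holomorphic with values in $\N(\g^\cn)$. The combinatorial input I would cite is the closure partial order on nilpotent orbits (the Hasse diagram of $\N(\g^\cn)$), which is in particular finite.

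First I would observe that for any closed algebraic subvariety $Y\subset\g^\cn$, the preimage $(A^\varphi_z)^{-1}(Y)$ is a closed analytic subset of the Riemann surface $M$, and by connectedness of $M$ it is therefore either all of $M$ or a discrete subset. Apply this to $Y=\overline{\O_X}$ for each nilpotent orbit $\O_X$ that meets the image of $A^\varphi_z$. The collection of orbits $\O_X$ for which $(A^\varphi_z)^{-1}(\overline{\O_X})=M$ is non-empty, because $\N(\g^\cn)$ itself is the closure of the regular nilpotent orbit. Among all such orbits, choose one, $\O_{X_0}$, of minimal dimension; this is possible since there are only finitely many orbits.

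Next I would examine the boundary $\overline{\O_{X_0}}\setminus\O_{X_0}$. By the closure order, this decomposes as a finite union of orbits $\O_{Y_1},\ldots,\O_{Y_r}$ with $\dim\O_{Y_j}<\dim\O_{X_0}$. For each such boundary orbit, $\overline{\O_{Y_j}}$ is a proper closed algebraic subvariety of $\g^\cn$, and the minimality choice of $\O_{X_0}$ forces $(A^\varphi_z)^{-1}(\overline{\O_{Y_j}})\neq M$: otherwise we would have a lower-dimensional orbit whose closure contains the entire image, contradicting minimality. By the first observation, each such preimage is therefore discrete.

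Finally I would set
\[
D=\bigcup_{j=1}^{r}(A^\varphi_z)^{-1}(\overline{\O_{Y_j}}),
\]
which is a finite union of discrete subsets of $M$, hence itself discrete. For $z\in M\setminus D$, the value $A^\varphi_z(z)$ lies in $\overline{\O_{X_0}}$ but not in any boundary orbit, so it lies in $\O_{X_0}$ itself. I do not anticipate a major obstacle: the only delicate point is to have on hand the fact that the closure of a nilpotent orbit is a finite union of orbits of strictly smaller dimension (the usual stratification of the nilpotent cone), which is precisely the structure recorded in the classification of \cite{Collingwood} referenced throughout the paper.
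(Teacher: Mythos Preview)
Your argument is correct and is precisely a detailed fleshing-out of the paper's approach: the paper does not give a separate proof but simply records, in the sentence preceding the theorem, that each orbit is open in its Zariski closure and that $A^\varphi_z$ is holomorphic on a one-dimensional complex domain, leaving the reader to draw the conclusion. Your proposal supplies the standard details (finiteness of orbits, the identity principle for preimages of algebraic sets, and the minimal-dimension selection) that make this inference rigorous, so the two are essentially the same argument.
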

The least $r$ such that $X^r=0$ is the \emph{nilorder} of $X\in \N(\g^\cn)$. If  $\varphi:M\to \O_X$ off a discrete set and $r$ is the nilorder of $X$, we say that $\varphi$ has \emph{nilorder} $r$.

Consider now a symmetric space $G/K$ totally geodesically immersed in  $G$ and let $\g=\k\oplus \m$ be the corresponding symmetric decomposition. Suppose that $\varphi$ corresponds to a nilconformal harmonic map with values in $G/K$. In this case, $A^\varphi_z$ is a holomorphic section of the bundle  $[\m]^\cn$ whose fibre at the point $gK$ is $\Ad g(\m^\cn)$ . Let $\psi:M\to G$ be a (local) lift of $\varphi$ into $G$ so that $\varphi=\psi K$. Then $\Ad \psi^{-1}(A^\varphi_z )$ takes values in a single conjugacy class of nilpotent elements in $\m^\cn$ under the adjoint action of $K^\cn$.

\begin{proposition}\label{prop:nilconformal} Any weakly conformal harmonic map from a Riemann surface to $\Oc P^2$ is nilconformal with nilorder $r\leq 5$.
\end{proposition}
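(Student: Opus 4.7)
The plan is to reduce the statement directly to Proposition \ref{isotropic} by identifying $A^\varphi_z$ with the operator $\widehat{Z}$ associated to the tangent vector $Z = \partial_z\varphi$.

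First, view $\varphi$ as a map into $F_4$ via the totally geodesic embedding $\Oc P^2 \hookrightarrow \sqrt{e}\subset F_4$ described in Section \ref{Sec:canonicalprojections}. Under the fundamental representation $\rho$, we have $\rho(\varphi)=2\pi_{V}-I$, where $V=V_\varphi$ is the $10$-dimensional subbundle of $\h_3^0(\Oc)^\cn$ assigned to $\varphi$ by Theorem \ref{grass}. Since $\rho(\varphi)^2=I$, the formula for $A^\varphi_z$ gives
$$
\rho(A^\varphi_z)=\tfrac12\rho(\varphi)^{-1}\partial_z\rho(\varphi)=(2\pi_V-I)\,\partial_z\pi_V.
$$
On the other hand, the proof of equation \eqref{hatZ} shows that the orthogonal projection onto $V_P$ is given by the formula \eqref{ortoproj}, whose derivative at $P=\varphi(z)$ in the direction $Z=\partial_z\varphi$ is precisely $\widehat{Z}$. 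Consequently $\partial_z\pi_V=\widehat{Z}$, and therefore
$$
\rho(A^\varphi_z)=(2\pi_V-I)\,\widehat{Z}.
$$

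Set $J=2\pi_V-I$, so that $J^2=I$ and $J$ equals $+1$ on $V$ and $-1$ on $V^\perp$. Since $\widehat{Z}$ interchanges $V$ and $V^\perp$, we have $J\widehat{Z}=-\widehat{Z}J$. A short induction then gives, for every $k\geq1$,
$$
\bigl(\rho(A^\varphi_z)\bigr)^k=\pm J^{\,\varepsilon_k}\,\widehat{Z}^{\,k}\qquad(\varepsilon_k\in\{0,1\}).
$$
Because $J$ is invertible, $\rho(A^\varphi_z)^{k}=0$ if and only if $\widehat{Z}^{\,k}=0$. It remains to invoke the weakly conformal hypothesis: this is precisely the condition $\langle\partial_z\varphi,\partial_z\varphi\rangle=\tfrac12\tr(Z\circ Z)=0$, i.e.\ $Z$ is isotropic in $T^\cn_{\varphi(z)}\Oc P^2$. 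Proposition \ref{isotropic} now gives $\widehat{Z}^5=0$, and therefore $\rho(A^\varphi_z)^5=0$. Since $\rho$ is a faithful representation of $F_4$, this shows that $A^\varphi_z$ is a nilpotent element of $\f_4^\cn$ of nilorder $\leq5$, so that $\varphi$ is nilconformal with nilorder $\leq5$, as claimed.

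The serious work here has already been done in Proposition \ref{isotropic}: the octonionic computation showing that the fourth power of left multiplication by an isotropic element of $A_{\frac12}(P)^\cn$ on $V_P^\perp$ vanishes. The one delicate point in the present argument is keeping track of the two different ways to view the derivative: $A^\varphi_z$ lives in the Lie algebra picture coming from the embedding into $F_4$, whereas $\widehat{Z}$ lives in the Grassmannian picture $Gr^a$; the anticommutation $J\widehat{Z}=-\widehat{Z}J$ is exactly what allows us to pass between them without losing the bound on the nilorder.
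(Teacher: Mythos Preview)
Your proof is correct and follows essentially the same route as the paper's: both reduce directly to Proposition~\ref{isotropic}. The paper's proof is a single sentence that leaves implicit the identification between $A^\varphi_z$ and $\widehat{Z}$; you have made this identification explicit and carefully tracked the extra factor $J=2\pi_V-I$, showing via the anticommutation $J\widehat Z=-\widehat Z J$ that the nilorders of $\rho(A^\varphi_z)=J\widehat Z$ and $\widehat Z$ coincide.
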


\begin{proof} A weakly conformal harmonic map from a Riemann surface $M$ is a conformal immersion off a discrete set of points where its differential vanishes. Hence the result follows from Proposition \ref{isotropic}.
\end{proof}

Making use  of the classification of nilpotent orbits in Lie algebras as described in \cite{Collingwood}, we see that there are precisely two such nontrivial nilpotent orbits  associated to the Cayley plane (see Appendix \ref{ap:nilpotent} for more details): if we consider the base point $V_0\in Gr^a\cong \Oc P^2$ given, in terms of weight spaces, by
\begin{equation}\label{basepoint}
V_0=W_{11}+ W_{7}+ W_{6}+ W_{5}+ W_0+ W_{-5}+ W_{-6}+ W_{-7}+ W_{-11},
\end{equation}
and the corresponding symmetric decomposition  $\f_4=\k\oplus \m$  given by \eqref{symdecomposition}, with $I=\{3,4\}$, then one of the orbits is represented by  $X_4\in \m^\cn$ and the other orbit  represented by $X_3+X_4\in \m^\cn$, where $X_3$ and $X_4$ are nonzero elements of the root spaces $(\f_4)_{\alpha_3}$ and $(\f_4)_{\alpha_4}$, respectively.

\subsection{$J_2$-lifts}\label{J2lifts}

Suppose that we have a nilconformal harmonic map $\varphi:M\to \Oc P^2$. Then, off a discrete subset of $M$, $A^\varphi_z$ takes values in a single $F^\cn_4$-orbit of nilpotent elements in $\f^\cn_4$. More precisely,  $A^\varphi_z$  is a section of $[\m^\cn]\cap \overline{\O_X}$ with   $X=X_4$ or $X=X_3+X_4$.
We will show that in both cases (see \S \ref{XX4} and \S \ref{XX34}) $\varphi$ admits a $J_2$-holomorphic lift into $T_4$, so that the following holds.
\begin{theorem}\label{thm-twistor}
Any weakly conformal harmonic map $\varphi:M\to \Oc P^2$ admits a $J_2$-holomorphic lift into $T_4$.
\end{theorem}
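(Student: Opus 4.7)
The plan is as follows. By Proposition \ref{prop:nilconformal}, $\varphi$ is nilconformal with nilorder at most $5$, so off a discrete subset $S \subset M$ the holomorphic section $A_z^\varphi$ takes values in a single nilpotent $F_4^\cn$-orbit, represented either by $X_4$ or by $X_3+X_4$, as already noted in the paragraph preceding the theorem. By the lemma characterising $J_2$-holomorphic lifts into $T_4$ (case (ii)), it is enough to exhibit a line subbundle $\ell \subset \h_3^0(\Oc)^\cn$ which is holomorphic with respect to $D_{\bar z}^\varphi$, lies pointwise in $\ker A_z^\varphi$, and satisfies $\ell\cdot\ell = 0$ so as to define a map into $T_4$ (see Proposition \ref{P4}).

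The natural candidate is the top image
$$
\ell \;=\; \mathrm{im}\bigl((A_z^\varphi)^{m}\bigr),
$$
where $m$ is the largest integer for which this operator does not vanish identically on the fundamental representation. Since $A_z^\varphi$ is holomorphic with respect to $D_{\bar z}^\varphi$, which is precisely the harmonicity equation \eqref{eq:harmonicity}, every power of $A_z^\varphi$ is a holomorphic endomorphism of the trivial bundle, and its image defines a holomorphic subsheaf; provided the generic rank is one, saturation yields a genuine holomorphic line subbundle on all of $M$, extending smoothly across the discrete locus $S$. That $\ell \subset \ker A_z^\varphi$ is automatic from $(A_z^\varphi)^{m+1}=0$. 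The two remaining pointwise conditions, namely that the generic rank equals one and that $\ell$ squares to zero, depend only on the $F_4^\cn$-orbit of $A_z^\varphi$, so by transitivity on each orbit they may be verified on the explicit representatives $X_4$ and $X_3+X_4$ from Appendix \ref{ap:nilpotent}.

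The verifications are then purely Lie-algebraic and are to be carried out separately in subsections \ref{XX4} and \ref{XX34}. In each case one uses the weight-space decomposition of $\h_3^0(\Oc)^\cn$ and the multiplication table of Appendix \ref{ap:roots} to compute the filtration $\rho(X)^k\bigl(\h_3^0(\Oc)^\cn\bigr)$, read off the top image as a single weight line, and check directly that its Jordan square vanishes, so that it belongs to $T_4$ via Proposition \ref{P4}. I expect the main obstacle to be the orbit of $X=X_3+X_4$: here $\rho(X)$ has a longer image filtration on the $26$-dimensional fundamental representation, and identifying the unique weight line in the top image that simultaneously squares to zero requires careful tracking of how the root vectors move the weights and how the octonionic multiplication acts on the corresponding weight spaces. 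The orbit of $X_4$, by contrast, is essentially a single root shift and can be handled by inspection.
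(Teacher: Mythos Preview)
Your proposal has a genuine gap in the $X_4$ case, and your intuition about which case is the hard one is exactly backwards.

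For a line $\ell\in T_4$ to be a lift of $\varphi$ one needs $p_4(\ell)=\varphi$; since $p_4(\ell)=\ell\oplus\bar\ell\oplus(\ell_s\cap\bar\ell_s)$ always contains $\ell$, a necessary condition is $\ell\subset\varphi$. Now recall that $A_z^\varphi$ takes values in $[\m^\cn]$, so $\rho(A_z^\varphi)$ interchanges $\varphi$ and $\varphi^\perp$; hence $\im(A_z^\varphi)^m\cap\varphi=(A_z^\varphi)^m(\varphi)$ and $\im(A_z^\varphi)^m\cap\varphi^\perp=(A_z^\varphi)^m(\varphi^\perp)$. For $X=X_4$ the nilorder on the fundamental representation is $3$, and from the action listed in \S\ref{XX4} the only vector surviving $X^2$ is $W_{-4}$, giving $\im X^2=W_4$. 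But $W_{-4}\in V_0^\perp$, so $\im X^2\subset\varphi^\perp$, not $\varphi$. A direct computation using the multiplication table confirms $p_4(W_4)=W_{\pm4}\oplus W_0\oplus W_{\pm8}\oplus W_{\pm9}\oplus W_{\pm10}\neq V_0$. Thus your candidate line is a perfectly good point of $T_4$ in the kernel of $A_z^\varphi$, but it lifts a \emph{different} point of $\Oc P^2$, and the conditions of the lemma (which are stated for $\varphi=p_4\circ\psi$) do not apply.

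The paper's treatment of this orbit is accordingly more elaborate: starting from the line $A=\im(A_z^\varphi)^2\subset\varphi^\perp$, one forms the annihilator $A_a$ and the bundle $\D=A_a^2\cap A_a\cap\varphi$; then $\D\cap\bar\D^\perp$ is a rank-$4$ isotropic holomorphic subbundle of $\varphi$ lying in $\ker A_z^\varphi$, and the lift is obtained by choosing any holomorphic line inside it. That last step needs its own argument (holomorphic triviality if $M$ is non-compact, meromorphic sections and filling zeros if $M$ is compact), and the lift is not canonical. By contrast, for $X=X_3+X_4$ your idea works essentially as stated: the top image $\im X^4=W_5$ is one-dimensional, lies in $\varphi$, squares to zero, and projects to $V_0$; the paper uses the equivalent description $\im X^3|_{\varphi^\perp}=W_5$.
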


\subsubsection{The case $X=X_4$.}\label{XX4} In this case, $X$  acts as follows with respect to the fundamental representation:

$$
\begin{array}{ccccc}
W_{-12}\mapsto W_{-11} & W_{-7}\mapsto W_{-3}  & W_{-6}\mapsto W_{-2} & W_{-5}\mapsto W_{-1}  & W_{-4}\mapsto W_0\\
W_0\mapsto W_4 & W_1\mapsto W_5  & W_2\mapsto W_6 & W_3\mapsto W_7  & W_{11}\mapsto W_{12}\\
\end{array}
$$
and $X(W_i)=0$ otherwise. We note from this that
$$
\im X^2 \subseteq W_4\subset V_0^\perp,
$$
with $V_0$ given by \eqref{basepoint}. 
\begin{lemma}
$X$ has nilorder $3$,  $\di \im X^2=1$.
\end{lemma}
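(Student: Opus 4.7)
The plan is to read off $X^{2}$ and $X^{3}$ directly from the explicit action of $X=X_{4}$ on the weight spaces $W_{i}$ given in the table just above the lemma. For each $W_{i}$ appearing as a source in the table, one checks whether its image $X(W_{i})=W_{j}$ is itself a source; if not, $W_{j}$ lies in $\ker X$ and so $W_{i}\subset\ker X^{2}$.

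Scanning the ten arrows in the table, the targets are
$W_{-11},W_{-3},W_{-2},W_{-1},W_{0},W_{4},W_{5},W_{6},W_{7},W_{12},$
and of these only $W_{0}$ reappears as a source (mapping to $W_{4}$). Hence the unique chain of length two is
\[
W_{-4}\ \xrightarrow{X}\ W_{0}\ \xrightarrow{X}\ W_{4},
\]
while every other weight space is annihilated by $X^{2}$. This gives $\operatorname{im}X^{2}=W_{4}$, so $\dim\operatorname{im}X^{2}=1$ and in particular $X^{2}\neq 0$.

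To finish, note that $W_{4}$ does not appear as a source in the table, so $X(W_{4})=0$, and therefore $X^{3}=0$ on the whole fundamental representation. Together with $X^{2}\neq 0$, this shows that the nilorder of $X$ is exactly~$3$.

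There is no real obstacle here: the statement is a direct consequence of the tabulated weight-space description of $X_{4}$, and the only thing one must trust is the multiplication data from Appendix~\ref{ap:roots} that produced the table. The observation $\operatorname{im}X^{2}\subseteq W_{4}\subset V_{0}^{\perp}$ already recorded in the paragraph preceding the lemma is exactly what this calculation refines to an equality.
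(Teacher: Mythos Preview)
Your reading of the table correctly gives $X^3=0$ and $\im X^{2}\subseteq W_{4}$, but the step ``this gives $\im X^{2}=W_{4}$'' is not justified by the table alone. The arrows $W_{-4}\mapsto W_{0}$ and $W_{0}\mapsto W_{4}$ only say that each of these two maps is nonzero; they do not tell you that the \emph{composite} is nonzero. Indeed, $W_{0}$ is two–dimensional, so $X(W_{-4})$ is a line in $W_{0}$ and $\ker\bigl(X|_{W_{0}}\bigr)$ is another line in $W_{0}$, and nothing in the table rules out that these two lines coincide. If they did, you would have $X^{2}=0$.

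The paper closes exactly this gap with a short $\sl_{2}$ argument: the root vector $X_{4}$ sits in the $\sl_{2}$ attached to $\alpha_{4}$, and $W_{-4}$ has $H_{\alpha_{4}}$–eigenvalue $-2$. If $X^{2}=0$ then $W_{-4}+X(W_{-4})$ is an $\sl_{2}$–subrepresentation whose set of $H_{\alpha_{4}}$–eigenvalues is $\{-2,0\}$ (or just $\{-2\}$), which is not symmetric about $0$, contradicting basic $\sl_{2}$ representation theory. Equivalently, the $\alpha_{4}$–string through the weight $-\alpha_{4}$ is $\{-\alpha_{4},0,\alpha_{4}\}$, so $W_{-4}\oplus W_{0}\oplus W_{4}$ contains a three–dimensional irreducible $\sl_{2}$–summand on which $X^{2}\colon W_{-4}\to W_{4}$ is an isomorphism. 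Once you add this sentence, your argument is complete and essentially matches the paper's.
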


\begin{proof}
Clearly, $X^3=0$ and $X^2(W_i)=0$  for all $i\neq -4$.
However, since $\alpha_4$ is a weight, we cannot have $X^2=0$, for this would imply that $W_{-4}+X(W_{-4})$ is a representation of  the $\mathfrak{sl}_2$ associated to the root $\alpha_4$, which is impossible since the eigenvalues must be symmetric around the origin.
\end{proof}

Let $(W_4)_a$ be the \emph{annihilator} of $W_4 = \im X^2$, i.e., the maximal subspace $W$ of $\h_3^0(\Oc)^\cn$ satisfying $W\cdot W_4=0$. It follows easily that
\begin{equation*}
\begin{split}
(W_4)_a=&W_{-11}+W_{-10}+W_{-9}+W_{-8}+W_{-3}+W_{-2}+W_{-1}+W'_0\\
&+W_4+W_5+W_6+W_7+W_8+W_9+W_{10}+W_{12},
\end{split}
\end{equation*}
where $W'_0=(W_4)_a\cap W_0$.  From a simple calculation using the symmetry of the trilinear form on $\h_0^3(\Oc)^\cn$ it follows easily that $W'_0=\overline{W'_0}$. Set $V=(W_4)_a\cap(W_4)_a^2\cap\varphi$ and observe that
$$
V\cap \bar{V}^\perp=W_{-11}+W_7+W_6+W_5.
$$
From this we also see that $(V\cap\bar{V}^\perp)^2=0$.

Since $A^\varphi_z$  is a holomorphic section of $[\m^\cn]\cap\overline{\O_{X_4}}$, we can use the standard procedure of filling out zeros \cite[Proposition 2.2]{burstall-wood}   at points where  $(A^\varphi_z)^2$ does not have maximal rank in order to make $A=\im(A^\varphi_z)^2$ a $D^\varphi_z$-holomorphic subbundle of $\varphi^\perp$. As above, denote by $A_a$ the annihilator of $A$ and set
$$
\D=A_a^2\cap A_a\cap\varphi.
$$
It follows easily that $\D\cap \bar\D^\perp$ is a holomorphic,  isotropic subbundle of $\varphi$ contained in the kernel of $A^\varphi_z$. Hence, any holomorphic line subbundle of $\D\cap \bar\D^\perp$ will suffice as a twistor lift of $\varphi$. What remains is therefore only to show that there exists a line subbundle of $\D\cap \bar\D^\perp$. If $M$ is non-compact and all holomorphic bundles therefore holomorphically trivial, finding a holomorphic line subbundle of $\D\cap \bar\D^\perp$ is certainly possible. When $M$ is compact it is well-known that there exists a non-trivial meromorphic section $\eta$ of $\D\cap \bar\D^\perp$. The zeros and poles of $\eta$ constitute a finite set, and we can use the standard method to ``extend" the line bundle $\spa\{\eta\}$ across this set. Hence we have a holomorphic line subbundle of $\D\cap \bar\D^\perp$ across the entire surface. This completes the proof.

%
%
%
%

\subsubsection{The case $X=X_3+X_4$.}\label{XX34}

In this case, $X$  acts as follows with respect to the fundamental representation:
\begin{align*}
   & W_{-12}\mapsto W_{-11} \mapsto W_{-10}\mapsto 0,\quad    W_{10}\mapsto W_{11} \mapsto W_{12}\mapsto 0\\ &  W_{-9}\mapsto W_{-7} \mapsto W_{-3} \mapsto 0,\quad  W_{3}\mapsto W_{7} \mapsto W_{9}\mapsto 0  \\
   &   W_{-8}\mapsto W_{-6} \mapsto
     W_{-2}\mapsto 0,\quad   W_{2}\mapsto W_{6} \mapsto W_{8}\mapsto 0 \\
     & W_{-5}\mapsto W_{-1}+W_{-4}  \mapsto W_{0} \mapsto W_{1}+W_{4} \mapsto W_{5}\mapsto 0
  \end{align*}

\begin{lemma}$\im X^3_{|\varphi^\perp}$ is a holomorphic line subbundle of $\varphi$.
\end{lemma}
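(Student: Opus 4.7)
My plan is to first read off the rank of $X^3\big|_{\varphi^\perp}$ at the base point directly from the tabulated action on weight spaces preceding the lemma, and then globalise this pointwise rank statement to a holomorphic subbundle by using the $D^\varphi$-holomorphicity of $A^\varphi_z$.

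From the table, every weight vector sitting in a Jordan chain of length at most three is killed by $X^3$. The unique chain of length five, namely $W_{-5}\to W_{-1}\oplus W_{-4}\to W_0\to W_1\oplus W_4\to W_5$, therefore supports the entire image of $X^3$: it is spanned by a vector of $W_1\oplus W_4$ arising from $W_{-5}$ and a vector of $W_5$ arising from $W_{-1}\oplus W_{-4}$. Since $V_0$ contains $W_{\pm 5}$ and $W_0$ but not $W_{\pm 1}$ nor $W_{\pm 4}$, the first source lies in $\varphi$ and the second in $\varphi^\perp$. Restricting to $\varphi^\perp$ thus retains only the second contribution, so $\im X^3\big|_{\varphi^\perp}$ is one-dimensional and contained in $W_5\subset\varphi$. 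By $F_4^\cn$-equivariance of this computation, the same statement holds pointwise for $(A^\varphi_z)^3\big|_{\varphi^\perp}$ at every $z\in M$ where $A^\varphi_z$ lies in the open orbit $\O_X$.

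To promote this pointwise fact to a holomorphic line subbundle of $\varphi$, I would use two observations. A direct computation with $\varphi=\pi_\varphi-\pi_{\varphi^\perp}$ identifies $D^\varphi=d+A^\varphi$ with the diagonal connection $\pi_\varphi d\pi_\varphi+\pi_{\varphi^\perp}d\pi_{\varphi^\perp}$, so both $\varphi$ and $\varphi^\perp$ are $D^\varphi_{\bar z}$-holomorphic subbundles of the trivial bundle. The harmonicity equation \eqref{eq:harmonicity} is precisely $[D^\varphi_{\bar z},A^\varphi_z]=0$, so $(A^\varphi_z)^3$ is a holomorphic endomorphism; combined with the fact that $A^\varphi_z\in\m^\cn$ swaps $\varphi$ and $\varphi^\perp$, its cube restricts to a holomorphic morphism $\varphi^\perp\to\varphi$ of generic rank one. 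The standard filling-out-zeros construction used in \S\ref{XX4} then extends the generically defined image line across its discrete degeneracy locus and yields the required holomorphic line subbundle of $\varphi$.

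The one point requiring genuine care, in my view the main obstacle, is the $\mathfrak{sl}_2$-bookkeeping inside the two-dimensional weight spaces $W_{\pm 1}\oplus W_{\pm 4}$ and $W_0$: one must verify that the directions complementary to the length-five chain lie in length-three chains on which $X^3$ vanishes, so that the rank of $X^3\big|_{\varphi^\perp}$ is indeed one and not two. (The dimension count $26=7\cdot 3 + 5$ forces the complementary three dimensions to form precisely one additional length-three chain, which is enough.) Once this verification is made, everything else is formal and follows the template already set up in \S\ref{XX4}.
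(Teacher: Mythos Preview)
Your outline is on the right track and parallels the paper's argument closely: both reduce to the pointwise claim $X^3(W_{-1}\oplus W_{-4})=W_5$, and your globalisation via $D^\varphi_{\bar z}$-holomorphicity of $(A^\varphi_z)^3$ and filling out zeros is correct (the paper leaves that step implicit).

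The one genuine difference is how the key nontriviality is justified. The paper observes that the root spaces for $\alpha_3$, $\alpha_4$ and $\alpha_3+\alpha_4$ generate a copy of $\sl_3\subset\f_4^\cn$, and that the $8$-dimensional block $W_{-5}\oplus(W_{-1}\oplus W_{-4})\oplus W_0\oplus(W_1\oplus W_4)\oplus W_5$ is precisely its adjoint representation. Since $X=X_3+X_4$ is a principal nilpotent in this $\sl_3$, its Jordan type on the adjoint is $5+3$ by standard theory, whence $X^3(W_{-1}\oplus W_{-4})=W_5$ immediately.

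Your dimension count $26=7\cdot 3+5$, as written, is circular: it presupposes the existence of a length-$5$ Jordan block, which is exactly what is at issue. Note also that your stated worry---that the rank of $X^3|_{\varphi^\perp}$ might be two---is not the real obstacle: the weight arrows already force $X^3(W_{-1}\oplus W_{-4})\subset W_5$ and $X^3(W_1\oplus W_4)=0$, so the rank is automatically at most one because $\dim W_5=1$. What actually needs proof is that the rank is \emph{at least} one, i.e.\ that $X^4(W_{-5})\neq 0$. The displayed arrows alone do not give this, since they only record where each weight space is sent, not whether the compositions through the two-dimensional intermediate spaces vanish. The paper's $\sl_3$ identification is precisely the clean way to do the ``$\sl_2$-bookkeeping'' you correctly flagged as the main obstacle.
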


\begin{proof} It is easy to see that the root spaces associated to the roots $\alpha_3$, $\alpha_4$ and $\alpha_3+\alpha_4$ and their commutators generate a copy of $\sl_3$ inside $\f_4$, the action of which on
$$
W_5 + W_1 + W_4 + W_0 + W_{-1} + W_{-4} + W_{-5}
$$
is precisely its adjoint representation. From a simple calculation we thus see that $X^3(W_{-1} + W_{-4})=W_{5}$.
\end{proof}

Since $\ell=\im X^3_{|\varphi^\perp}$ is easily seen to be in the kernel of $A^\varphi_z$, this defines a $J_2$-holomorphic twistor lift of $\varphi$ into $T_4$.

This concludes the proof of Theorem \ref{thm-twistor}.

\begin{remark} Recall that
$$
(\f_4)^\cn=(\f_4)^{H_4}_{-2}\oplus(\f_4)^{H_4}_{-1}\oplus(\f_4)^{H_4}_0\oplus(\f_4)^{H_4}_1\oplus(\f_4)^{H_4}_2.
$$
We see from this that the twistor space $T_4$ is a  3-symmetric space and at the base point $x_0=eK$, with $K=\Spin(7)\times \U(1) $ we have
\begin{equation}\label{TJ2}
T^{1,0}_{J_2}T_4=(\f_4)^{H_4}_1\oplus(\f_4)^{H_4}_{-2}.
\end{equation}
In this case, we see from \eqref{gj} and \eqref{TJ2} that  $(\f_4)^1_{H_4}$  coincides with  $T^{1,0}_{J_2}T_4$,  which means that a smooth map $\ell:M\to T_4$ is $J_2$-holomorphic if and only if $\ell$ is a \emph{primitive} map into $T_4$ \cite{BurstallPedit}. In particular, $\ell$ is harmonic with respect to a suitable metric of $T_4$.
Since $\h^0_3(\Oc)$ is $26$-dimensional, $T_4$ can be naturally embedded in  $\cn P^{25}$. This embedding is not  totally geodesic, and therefore, in general, will not preserve harmonicity.
\end{remark}

\section{Harmonic maps of finite uniton number in $\Oc P^2$.}\label{se:unitons}

As first observed by K. Uhlenbeck \cite{uhlenbeck}, equations \eqref{eq:integrability} and \eqref{eq:harmonicity} can be reformulated in terms of the flatness of the one-parameter family of connections $\d+A_\lambda^\varphi$ on $M\times \cn^n$, where
\begin{equation*}\label{Blambda}
A_\lambda^\varphi=(1-\lambda^{-1})A^\varphi_z\d z+(1-\lambda)A^\varphi_{\bar{z}}\d\bar{z}\qquad(\lambda\in S^1).
\end{equation*}
When $M$ is simply connected we may integrate $A^\varphi_\lambda$ to obtain an \emph{extended solution}
$$
\Phi:M\to \Omega G=\{\gamma: S^1\to G\;(\mbox{smooth}):\, \gamma(1)=e\},
$$
satisfying $\Phi^{-1}\d\Phi=A^\varphi_\lambda$ and $\Phi_{\lambda = -1}=\varphi$; such a map $\Phi$ is unique up to left multiplication by a constant loop.

When the Fourier series in $\lambda\in S^1$ associated to an extended solution has finitely many terms, the extended solution and the corresponding harmonic map are said to have \emph{finite uniton number}. It is well known that any harmonic map from the $2$-sphere into a compact Lie group has finite uniton number \cite{uhlenbeck}. Among the extended solutions of finite uniton number, the simplest case occurs when $\Phi:M\to \Omega G$ takes values in a $G$-conjugacy class of homomorphisms  $S^1\to G$. Such extended solutions  are said to be \emph{$S^1$-invariant}. Next we shall  establish the general form for  $S^1$-invariant extended solutions  corresponding to maps into  $\Oc P^2$.

We denote by $Gr(G)$ the \emph{Grassmannian model} \cite{pressley-segal} for the loop group $\Omega G$. When $G=\U(n)$, then we simply write $Gr = Gr(\U(n))$. This model associates to each loop $\gamma\in\Omega G$ the closed subspace  $V\in Gr(G)$ of  $\H=L^2(S^1,\cn^n)$ defined by $V=\gamma \H_+$, where  $\H_+$ is the closed subspace of  $\H$ consisting of Fourier series whose negative coefficients vanish. For example, from \cite[Proposition 8.5.1]{pressley-segal} we know that $Gr(\SO (n))=\big\{{V\in {Gr}:\,\bar{V}^{\perp}=\lambda V \big\}}$.

We can use the complex bilinear  product in $\h_3^0(\Oc)^\cn$ to define a product on the Hilbert space $\H$ of square-summable $\cn^{26}\cong \h_3^0(\Oc)^\cn$-valued functions on the circle: if $f,g\in \H$, then $(f\cdot g)(\lambda)=f(\lambda)\cdot g(\lambda)$. The Grassmannian model of $\Omega F_4$  is given by the following proposition, whose proof we omit since it is analogous to that of
Proposition 3.2 in \cite{CP2012} for the $G_2$ case (see also the proof of Theorem 8.6.2 in \cite{pressley-segal}).

\begin{proposition}
With respect to the fundamental representation of $F_4$, we have:
$$
{Gr}(F_4)=\{V\in {Gr}\big(\SO(26)\big):\,V^{sm}\cdot V^{sm}\subseteq V^{sm}\}.
$$
where $V^{sm}$ denotes the
subspace  of smooth functions in $V$, which is dense in V \cite{pressley-segal}.
\end{proposition}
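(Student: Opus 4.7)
The plan is to prove the two inclusions of the claimed equality separately, following the template established for $G_2$ in \cite{CP2012}. Identify each $V \in Gr(\SO(26))$ with the associated based loop $\gamma \in \Omega\SO(26)$ satisfying $V = \gamma\H_+$.

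The inclusion $(\subseteq)$ is immediate: if $\gamma \in \Omega F_4$, then for $f_+, g_+ \in \H_+^{sm}$, the pointwise $F_4$-invariance of $\cdot$ gives $(\gamma f_+)\cdot(\gamma g_+)=\gamma(f_+\cdot g_+)$, and since $\H_+^{sm}$ is closed under pointwise product (positive Fourier series multiply to positive Fourier series), the right-hand side lies in $V^{sm}$.

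For the harder inclusion $(\supseteq)$, I would test the hypothesis on real constant functions $a, b \in \h_3^0(\Oc) \subset \H_+^{sm}$. The hypothesis $V^{sm}\cdot V^{sm}\subseteq V^{sm}$ gives $\gamma a \cdot \gamma b \in V^{sm}$, or equivalently
$$
F(\lambda) := \gamma(\lambda)^{-1}\bigl(\gamma(\lambda)a \cdot \gamma(\lambda)b\bigr) \in \H_+^{sm},
$$
so the Fourier coefficients $F_n$ vanish for $n<0$. The constraint $V \in Gr(\SO(26))$ now contributes the crucial reality: since $\gamma(\lambda)$ is a real orthogonal matrix and the Jordan product is defined over $\rn$, $F$ takes values in the real form $\h_3^0(\Oc)$, which forces $F_{-n}=\overline{F_n}$. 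Combined with the vanishing for $n<0$, this collapses $F$ to a constant, and evaluating at $\lambda = 1$ where $\gamma(1)=e$ identifies this constant as $a\cdot b$. Complex bilinear extension then yields $\gamma(\lambda)(a \cdot b) = \gamma(\lambda)a \cdot \gamma(\lambda)b$ for all $a, b \in \h_3^0(\Oc)^\cn$, so $\gamma(\lambda) \in F_4$.

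The main obstacle to navigate is setting up the reality conventions so that the Fourier argument is airtight: confirming that $V\in Gr(\SO(26))$ forces $\gamma$ to take values in the real group $\SO(26)$ rather than in $\SO(26,\cn)$, and that testing the product condition against constants alone is enough to conclude. Once these are in place, the essential content is the collapse of $F$ to a constant, which is precisely what converts the algebraic condition on $V$ into pointwise $F_4$-invariance of $\gamma$, exactly as in the $G_2$ case of \cite{CP2012} and in the $\SO(n)$ argument of Theorem 8.6.2 in \cite{pressley-segal}.
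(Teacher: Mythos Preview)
Your proposal is correct and follows precisely the approach the paper indicates: the paper omits the proof, pointing instead to the analogous argument for $G_2$ in \cite{CP2012} and to Theorem~8.6.2 of \cite{pressley-segal}, and your Fourier-collapse argument using the reality of $\gamma(\lambda)\in\SO(26)$ is exactly that template. The only point worth making explicit in a final write-up is the last implication ``$\gamma(\lambda)\in F_4$'': an element of $\SO(\h_3^0(\Oc))$ preserving $\cdot$ extends, via $g(I)=I$ and the identity $a\circ b=a\cdot b+\tfrac{1}{3}\tr(a\circ b)\,I$ together with $\tr(a\circ b)=2\langle a,b\rangle_\rn$, to an automorphism of $(\h_3(\Oc),\circ)$, hence lies in $F_4$.
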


As Segal \cite{segal} has  observed, a smooth map $\Phi:M\to \Omega G$ is an extended solution if and only if $W=\Phi\H_+$ satisfies
$\partial_z W  \subseteq  \lambda^{-1}W$ (the \emph{pseudo-horizontality} condition) and $\partial_{\bar z} W \subseteq W$ ($W$ is a holomorphic vector subbundle of $M\times \H$), with respect to any local complex coordinate system  $(U,z)$. Hence, given an  $S^1$-invariant extended solution $\Phi$, we have
$$
W=A_{s}\lambda^{s}+A_{s+1}\lambda^{s+1}+\ldots +\lambda^{r}A_{r}+\lambda^r\H_+,
$$
for some integers $s\leq r$, where $A_{s}\subseteq  A_{s+1}\subseteq \ldots \subseteq A_r$ is a \emph{superhorizontal} sequence of holomorphic subbundles of $M\times\cn^n$, i.e., the holomorphic subbundles $A_i$ satisfy $\partial_z A_i\subseteq A_{i+1}$.

Burstall and Guest \cite{B-G} have shown that, after a normalization procedure, if $G$ has trivial centre, any $S^1$-invariant extended solution takes values in the $G$-conjugacy class of a homomorphism
$$
\gamma_I(\lambda)=\exp{(-\i\ln(\lambda)\xi_I)},
$$
with $\xi_I=\sum_{i\in I}H_i$ as defined in Section \S \ref{sec:twistor}. Clearly, there are only three such conjugacy classes associated to  harmonic maps into $\Oc P^2$:  $I=\{3\}$, $I=\{4\}$, and $I=\{3,4\}$. The representation of the corresponding canonical elements $\xi_I$ have been described in Section \S \ref{Sec:canonicalprojections}. Hence we have:

\begin{theorem}Let $\varphi:M\to \Oc P^2$ be a  harmonic map associated to an $S^1$-invariant extended solution. Then $\varphi$ admits an extended solution $\Phi$ such that $W=\Phi \H_+:M\to Gr(F_4)$ is given by one of the following forms:
\begin{enumerate}
\item[(i)]
$$
W=\ell\lambda^{-2}+\bar\ell_s^\perp\lambda^{-1}+\ell_s+\bar \ell^\perp\lambda  +\lambda^2\H_+,
$$
where $\ell$ is a holomorphic subbundle of isotropic lines in $\h_3^0(\Oc)^\cn$ and $\ell_s$ is the stabilizer of $\ell$. In this case, $\psi=\bar\ell: M\to T_4$ is a $J_2$-holomorphic lift of $\varphi$.
\item[(ii)]
$$
W=\C\lambda^{-3}+\C_a^2\lambda^{-2}+{\bar{\C}}_s^\perp\lambda^{-1}+\C_s+{\bar{\C_a^2}}^\perp \lambda +{\bar{\C}}^\perp\lambda^2+\lambda^3\H_+,
$$
where $\C$ is holomorphic subbundle of isotropic two-planes in $\h_3^0(\Oc)^\cn$ satisfying  $\C^2=0$, $\C_s$ its stabilizer and $\C_a$ its annihilator. In this case, $\psi=\bar\C: M\to T_3$ is a $J_2$-holomorphic lift of $\varphi$.
\item[(iii)]
\begin{align*}
W=\ell\lambda^{-5}+\C\lambda^{-4}+&\C_\ell^3\lambda^{-3}+\C_\ell^2\lambda^{-2}+\C_\ell\lambda^{-1}\\&+{\bar\C}_\ell^\perp+{\bar{\C_\ell^2}}^\perp \lambda+{\bar{\C_\ell^3}}^\perp \lambda^2+\bar{\C}^\perp \lambda^3+{\bar\ell}^\perp\lambda^4+\lambda^5 \H_+,
\end{align*}
where $\ell\subset \mathcal{C}$, $\ell$ is a holomorphic subbundle of isotropic lines in $\C$, $\C$ is holomorphic subbundle of isotropic two-planes in  $\C$ satisfying  $\C^2=0$, and $\C_\ell$ is the maximal subbundle satisfying $\mathcal{C}_\ell\cdot \mathcal{C}\subseteq \ell$. In this case, $\psi=(\bar\ell,\bar\C): M\to T_{34}$ is a $J_2$-holomorphic lift of $\varphi$.
\end{enumerate}
\end{theorem}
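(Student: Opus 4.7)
The plan is to combine the Burstall--Guest normalization with the explicit weight space computations already carried out in Section \ref{Sec:canonicalprojections}. Burstall--Guest tells us that, after modifying $\Phi$ by a constant loop, $\Phi$ takes values in a single $F_4$-conjugacy class of one of the three homomorphisms $\gamma_I$ for $I\in\{\{3\},\{4\},\{3,4\}\}$. So I would write (at least locally) $\Phi(z)=h(z)\gamma_I h(z)^{-1}$ for a smooth $h:M\to F_4$; since $h(z)^{-1}$ is a constant loop, it preserves $\H_+$, and therefore $W=\Phi\H_+=h\cdot\bigl(\gamma_I\H_+\bigr)$. Thus it suffices to compute $\gamma_I\H_+$ once and for all in terms of the $\rho(\xi_I)$-weight spaces of the fundamental representation, and then transport the resulting filtration by $h$.

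Next, for each $I$ I would carry out the eigenvalue book-keeping already tabulated in Section \ref{Sec:canonicalprojections}. For $I=\{4\}$, the eigenvalues of $\rho(H_4)/\i$ lie in $\{-2,-1,0,1,2\}$, so $\gamma_I$ acts on the corresponding weight summand $E_j\subset\h^0_3(\Oc)^\cn$ by $\lambda^j$ and
\[
\gamma_I\H_+=E_{-2}\lambda^{-2}\;+\;(E_{-2}\oplus E_{-1})\lambda^{-1}\;+\;(E_{-2}\oplus E_{-1}\oplus E_0)\;+\;(E_{-2}\oplus E_{-1}\oplus E_0\oplus E_1)\lambda\;+\;\lambda^2\H_+.
\]
An entirely parallel (but longer) enumeration for $I=\{3\}$ yields a filtration of length $4$ between $\lambda^{-3}$ and $\lambda^{3}\H_+$, and for $I=\{3,4\}$ a filtration of length $6$ between $\lambda^{-5}$ and $\lambda^{5}\H_+$.

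The key algebraic step is then to recognise the $h$-translates $h(E_j\oplus\cdots)$ as the canonical subbundles appearing in the theorem statement. This is where $F_4$-equivariance does all the work: the stabilizer, annihilator, square $(\cdot)^2$ and the bilinear trace form on $\h^0_3(\Oc)^\cn$ are all $F_4$-invariant operations. Taking as base points for $T_4$, $T_3$ and $T_{34}$ the same flags $W_{12}$, $W_{11}\oplus W_{12}$ and $(W_{12},W_{11}\oplus W_{12})$ used in Section \ref{Sec:canonicalprojections}, the sums of weight spaces which appear in the above filtration are precisely the stabilizers, annihilators, powers $\C_\ell^k$, conjugates and orthogonal complements of those base flags (the relevant identifications are already assembled in the computations leading up to \eqref{p4}, \eqref{p3projection} and \eqref{p34projection}). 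Transporting by $h$ then turns the base-point descriptions into the claimed bundle-valued ones, with $\ell:=h\cdot W_{12}$ (respectively $\C:=h\cdot(W_{11}\oplus W_{12})$), since $F_4$-equivariance gives $h(\ell_0)_s=(h\ell_0)_s$, $h(\C_0)_a=(h\C_0)_a$, and so on.

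Finally, to extract the twistor lift, I would use two things. First, the fact that $\Phi$ is an extended solution is equivalent, by Segal, to $\partial_z W\subset\lambda^{-1}W$ and $\partial_{\bar z}W\subset W$; applied to the top component of the filtration this forces $\bar\ell$ (respectively $\bar\C$, $(\bar\ell,\bar\C)$) to be a holomorphic subbundle of $\h^0_3(\Oc)^\cn$ annihilated by $A^\varphi_z$ in the sense of the lemma in \S\ref{subse:harmonic}, i.e.\ a $J_2$-holomorphic map into $T_I$. Second, the conjugation appearing in the lift (why $\bar\ell$ and not $\ell$) is accounted for by the reality of the fundamental representation together with the relation between $A^\varphi_\lambda$ at $\lambda=-1$ and the position of the base point $\exp(\pi\xi_I)$, so that the filtration realises the decomposition of $\varphi$ and its ``dual'' in conjugate weights. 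The principal obstacle is therefore not conceptual but organisational: namely, matching the conjugation and orthogonality conventions so that the explicit model form of $\gamma_I\H_+$ lines up exactly with the $F_4$-covariant bundle description written in the statement; once this dictionary is fixed at the base point, the three cases are then completely determined by the three weight tables already recorded in Section \ref{Sec:canonicalprojections}.
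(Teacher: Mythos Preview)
Your approach coincides with what the paper intends: the theorem is stated with no further argument beyond ``Hence we have:'', so it is meant to be read off directly from the Burstall--Guest normalization together with the $\rho(\xi_I)$-eigenspace tables assembled in \S\ref{Sec:canonicalprojections}, exactly as you outline by writing $W=h\cdot(\gamma_I\H_+)$ and invoking $F_4$-equivariance of the stabilizer/annihilator/power operations.

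One slip to fix when you write it up: since $\gamma_I(\lambda)=\exp(-\i\ln(\lambda)\xi_I)$ acts as $\lambda^{j}$ on the $j$-eigenspace $E_j$ of $\rho(\xi_I)/\i$, the $\lambda^{-2}$ summand of $\gamma_{\{4\}}\H_+$ is $E_{-2}=W_{-12}$, not $W_{12}$. Thus the $\ell$ of the statement is $h\cdot W_{-12}$, and it is $\bar\ell=h\cdot\overline{W_{-12}}=h\cdot W_{12}$ (using that $h\in F_4$ is real) that lands in $T_4$ with base point $W_{12}$; the analogous swap occurs in cases (ii) and (iii). You correctly anticipated this as the ``matching of conjugation conventions'' issue in your final paragraph, so it is only a matter of flipping the identification, not a gap in the argument.
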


\begin{example}\label{examplefu} Following the procedure introduced by  Burstall and Guest \cite{B-G} for obtaining harmonic maps of finite uniton number from a Riemann surface $M$ into an inner symmetric space in terms of meromorphic functions on $M$, we give next an explicit example of a $J_2$-holomorphic lift $\psi=\bar \ell$ into $T_4$ of a  harmonic map $\varphi$ from $S^2=\cn \cup\{\infty\}$ into $\Oc P^2$ associated to a $S^1$-invariant extended solution in the conjugacy class  $I=\{4\}$.

With the same notations of \cite{B-G}, since $\max\{\alpha(H_4)/\i :\, \alpha\in\Phi^+\}=2$, any such harmonic map $\varphi$ admits an extended solution of the form
$$
\Phi=\exp (C)\cdot\gamma:S^2\to F_4,
$$
where $\gamma(\lambda)=\exp(-\i\ln(\lambda) H_4)$ and
$$
C=c_0^1+c_0^2:S^2\to(\f_4)_1^{H_4}\oplus(\f_4)_2^{H_4}
$$
is a meromorphic function  satisfying
$$
\left(c_0^2\right)_z-\frac{1}{2!}\left[c_0^1,\left(c_0^1\right)_z\right]=0.
$$
Here $c_0^i$ is the component of $C$ in $(\f_4)_i^{H_4}$, with $i=1,2$. The corresponding holomorphic line bundle $\ell$ is then given by
\begin{equation}\label{ell4}
\ell=\rho(\exp\left(C\right))W_{-12}=\left(Id+\rho(C)+\frac{\rho(C)^2}{2}+\frac{\rho(C)^3}{3!}+\frac{\rho(C)^4}{4!}\right)W_{-12}.
\end{equation}


For each $\alpha$ with $\alpha(H_4)=\i$,  fix $X_\alpha\in(\f_4)_\alpha\subset(\f_4)_1^{H_4}$. Take
$$
c_0^1=z^2X_{\alpha_4}+zX_{\alpha_2+\alpha_3+\alpha_4},$$ then $$\left(c_0^1\right)_z=2zX_{\alpha_4}+X_{\alpha_2+\alpha_3+\alpha_4}.
$$
Since $[(\f_4)_{\alpha_4}, (\f_4)_{\alpha_2+\alpha_3+\alpha_4}]=0,$ we can take $c_0^2=0$. Fix vectors $w_i\in \h_3^0(\Oc)^\cn$ such that $W_i=\spa\{w_i\}$ and
$$
w_{-11}=X_{\alpha_4}w_{-12},\quad w_{-9}=X_{\alpha_2+\alpha_3+\alpha_4}w_{-12}.
$$
Since $\rho(C)^2=0$, from \eqref{ell4} we obtain
$$
\ell(z)=\langle w_{-12}+z^2w_{-11}+zw_{-9}\rangle.$$
\end{example}

\appendix

\section{$F_4$: roots and fundamental representation}\label{ap:roots}

In this appendix we describe the fundamental representation of $F_4$ in more Lie theoretic terms, making it possible to connect it to the general twistor theory for harmonic maps into Lie groups and (inner) symmetric spaces. Our main references are \cite{adams}, \cite{Ha} and \cite{Hu}.

Denote by $\f_4$ the Lie algebra of $F_4$ and let $\Phi^+=\{\alpha_1,\alpha_2,\alpha_3,\alpha_4\}$ be a set of positive simple roots, forming the following familiar Dynkin diagram:
\begin{center}
\begin{tikzpicture}
\draw[fill=black]
(-1.5, 0) circle [radius = .08] node [anchor = north] {$\alpha_1$}
(-0.5, 0) circle [radius = .08] node [anchor = north] {$\alpha_2$}
(0.5, 0) circle [radius = .08] node [anchor = north] {$\alpha_3$}
(1.5, 0) circle [radius = .08] node [anchor = north] {$\alpha_4$};
\draw
(-1.45, 0) -- (-0.55, 0)
(-0.45, -0.04) -- (0.45, -0.04)
(-0.45, 0.04)  -- (0.45, 0.04)
(0.55, 0) -- (1.45, 0);
\draw
(0,0) --++ (60:-.2)
(0,0) --++ (-60:-.2);
\end{tikzpicture}
\end{center}
The longest root of $\f_4$ is $2\alpha_1+3\alpha_2+4\alpha_3+2\alpha_4$, which is easily seen to be a fundamental dominant weight for $\f_4$ corresponding to the node $\alpha_1$ in the Dynkin diagram. By including the root $\alpha_0 = -(2\alpha_1+3\alpha_2+4\alpha_3+2\alpha_4)$ we get the extended Dynkin Diagram
\begin{center}
\begin{tikzpicture}
\draw[fill=black]
(-2.5, 0) circle [radius = .08] node [anchor = north] {$\alpha_0$}
(-1.5, 0) circle [radius = .08] node [anchor = north] {$\alpha_1$}
(-0.5, 0) circle [radius = .08] node [anchor = north] {$\alpha_2$}
(0.5, 0) circle [radius = .08] node [anchor = north] {$\alpha_3$}
(1.5, 0) circle [radius = .08] node [anchor = north] {$\alpha_4$};
\draw
(-2.45, 0) -- (-1.55, 0)
(-1.45, 0) -- (-0.55, 0)
(-0.45, -0.04) -- (0.45, -0.04)
(-0.45, 0.04)  -- (0.45, 0.04)
(0.55, 0) -- (1.45, 0);
\draw
(0,0) --++ (60:-.2)
(0,0) --++ (-60:-.2);
\end{tikzpicture}
\end{center}
By removing $\alpha_4$ from this we recover the Dynkin diagram of the subalgebra $\spin(9)$ in $\f_4$. This also shows that, as representations of $\spin(9)$ we have
$$
\f_4 = \spin(9) + \Delta_9,
$$
where $\Delta_9$ is the spin-representation of $\spin(9)$. It follwos that that the positive roots of $\f_4$ are $\Phi^+$ and
\begin{align*}
&\alpha_1+\alpha_2,\ \alpha_2+\alpha_3,\ \alpha_3+\alpha_4,\ \alpha_2+2\alpha_3,\ \alpha_1+\alpha_2+\alpha_3,\ \alpha_2+\alpha_3+\alpha_4,\\
& \alpha_1+\alpha_2+2\alpha_3,\ \alpha_1+\alpha_2+\alpha_3+\alpha_4,\ \alpha_2+2\alpha_3+\alpha_4,\ \alpha_1+2\alpha_2+2\alpha_3,\\
&\alpha_2+2\alpha_3+2\alpha_4,\ \alpha_1+\alpha_2+2\alpha_3+\alpha_4,\ \alpha_1+\alpha_2+2\alpha_3+2\alpha_4,\\
&\alpha_1+2\alpha_2+2\alpha_3+\alpha_4,\ \alpha_1+2\alpha_2+2\alpha_3+2\alpha_4,\ \alpha_1+2\alpha_2+3\alpha_3+\alpha_4,\\
&\alpha_1+2\alpha_2+3\alpha_3+2\alpha_4,\ \alpha_1+2\alpha_2+4\alpha_3+2\alpha_4,\ \alpha_1+3\alpha_2+4\alpha_3+2\alpha_4,\\
&2\alpha_1+3\alpha_2+4\alpha_3+2\alpha_4.
\end{align*}
The roots with coefficient $1$ in front of $\alpha_4$ are weights of $\spin(9)$ on $\Delta_9$, the rest are the roots of $\spin(9)$.

Since left multiplication with $P=\mathrm{diag}(1,0,0)$ on $\h_3(\Oc)$ commutes with the action of $\spin(9)$, the decomposition of $\h_3(\Oc)$ in \eqref{dec} is also a decomposition of $\h_3(\Oc)$ into $\spin(9)$ representations. In fact we have
$$
A_{\frac{1}{2}}(P)=\Delta_9
$$
and
$$
A_0(P)=1 + \lambda_9^1,
$$
where $1$ denotes the trivial representation spanned by $\mathrm{diag}(0,1,1)$ and $\lambda_9^1$ the vector representation of $\spin(9)$. As $\spin(9)$ acts trivially on $A_1(P)$ we get
$$
\h_3(\Oc) = 2\cdot 1 + \Delta_9 + \lambda_9^1.
$$
Taking the trace free part we get the fundamental representation of $F_4$ decomposing under $\spin(9)$ as
$$
\h_3^0(\Oc) = 1 + \Delta_9 + \lambda_9^1,
$$
where the trivial representation is spanned by $\mathrm{diag}(-2, 1, 1)$. With this we can now calculate the weights of the fundamental representation and find that these are precisely the short roots of $\f_4$:
\begin{align*}
&w_{12}=\alpha_1+2\alpha_2+3\alpha_3+2\alpha_4,\ w_{11}=\alpha_1+2\alpha_2+3\alpha_3+\alpha_4,\\
&w_{10}=\alpha_1+2\alpha_2+2\alpha_3+\alpha_4,\ w_9=\alpha_1+\alpha_2+2\alpha_3+\alpha_4,\\
&w_8=\alpha_2+2\alpha_3+\alpha_4,\ w_7=\alpha_1+\alpha_2+\alpha_3+\alpha_4,\ w_6=\alpha_2+\alpha_3+\alpha_4,\\
&w_5=\alpha_3+\alpha_4,\ w_4=\alpha_4,\ w_3=\alpha_1+\alpha_2+\alpha_3,\ w_2=\alpha_2+\alpha_3,\,\,  w_1=\alpha_3,
\end{align*}
plus their negatives, all of which have multiplicity $1$, and $w_0=0$ with multiplicity $2$. We will henceforth denote by $W_i$ the weight space associated to the weights $w_i$. Note that, for any two weight spaces $W_i$ and $W_j$, we have
$$
W_i\cdot W_j \subseteq\text{the weight space associated to the weight $w_i + w_j$,}
$$
where the corresponding weight space is zero in case $w_i + w_j$ is not a weight. The full multiplication table for the weight spaces is as follows:
%

\begin{table}[!htbp]
\resizebox{\textwidth}{!}{%
\begin{tabular}[scale = 0.5]{c||c|c|c|c|c|c|c|c|c|c|c|c|c} $\cdot$ & $W_0$ & $W_1$ & $W_2$ & $W_3$ & $W_4$ & $W_5$ & $W_6$ & $W_7$ & $W_8$ & $W_9$ & $W_{10}$ & $W_{11}$ & $W_{12}$\\ \hline\hline $W_{-12}$ & $W_{-12}$ & 0 & 0 & 0 & $W_{-11}$ & $W_{-10}$ & $W_{-9}$ & $W_{-8}$ & $W_{-7}$ & $W_{-6}$ & $W_{-5}$ & $W_{-4}$ & $W_0$\\ \hline $W_{-11}$ & $W_{-11}$ & $W_{-10}$ & $W_{-9}$ & $W_{-8}$ & 0 & 0 & 0 & 0 & $W_{-3}$ & $W_{-2}$ & $W_{-1}$ & $W_0$ & $W_4$\\ \hline $W_{-10}$ & $W_{-10}$ & 0 & $W_{-7}$ & $W_{-6}$ & 0 & 0 & $W_{-3}$ & $W_{-2}$ & 0 & 0 & $W_0$ & $W_1$ & $W_5$\\ \hline $W_{-9}$ & $W_{-9}$ & $W_{-7}$ & 0 & $W_{-5}$ & 0 & $W_{-3}$ & 0 & $W_{-1}$ & 0 & $W_0$ & 0 & $W_2$ & $W_6$\\ \hline $W_{-8}$ & $W_{-8}$ & $W_{-6}$ & $W_{-5}$ & 0 & 0 & $W_{-2}$ & $W_{-1}$ & 0 & $W_0$ & 0 & 0 & $W_3$ & $W_7$\\ \hline $W_{-7}$ & $W_{-7}$ & 0 & 0 & $W_{-4}$ & $W_{-3}$ & 0 & 0 & $W_0$ & 0 & $W_1$ & $W_2$ & 0 & $W_8$\\ \hline $W_{-6}$ & $W_{-6}$ & 0 & $W_{-4}$ & 0 & $W_{-2}$ & 0 & $W_0$ & 0 & $W_1$ & 0 & $W_3$ & 0 & $W_9$\\ \hline $W_{-5}$ & $W_{-5}$ & $W_{-4}$ & 0 & 0 & $W_{-1}$ & $W_0$ & 0 & 0 & $W_2$ & $W_3$ & 0 & 0 & $W_{10}$\\ \hline $W_{-4}$ & $W_{-4}$ & 0 & 0 & 0 & $W_0$ & $W_1$ & $W_2$ & $W_3$ & 0 & 0 & 0 & 0 & $W_{11}$\\ \hline $W_{-3}$ & $W_{-3}$ & 0 & 0 & $W_0$ & 0 & 0 & 0 & $W_4$ & 0 & $W_5$ & $W_6$ & $W_8$ & 0\\ \hline $W_{-2}$ & $W_{-2}$ & 0 & $W_0$ & 0 & 0 & 0 & $W_4$ & 0 & $W_5$ & 0 & $W_7$ & $W_9$ & 0\\ \hline $W_{-1}$ & $W_{-1}$ & $W_0$ & 0 & 0 & 0 & $W_4$ & 0 & 0 & $W_6$ & $W_7$ & 0 & $W_{10}$ & 0\\ \hline $W_0$ & $W_0$ & $W_1$ & $W_2$ & $W_3$ & $W_4$ & $W_5$ & $W_6$ & $W_7$ & $W_8$ & $W_9$ & $W_{10}$ & $W_{11}$ & $W_{12}$\\ \hline $W_1$ & $W_1$ & 0 & 0 & 0 & $W_5$ & 0 & $W_8$ & $W_9$ & 0 & 0 & $W_{11}$ & 0 & 0\\ \hline $W_2$ & $W_2$ & 0 & 0 & 0 & $W_6$ & $W_8$ & 0 & $W_{10}$ & 0 & $W_{11}$ & 0 & 0 & 0\\ \hline $W_3$ & $W_3$ & 0 & 0 & 0 & $W_7$ & $W_9$ & $W_{10}$ & 0 & $W_{11}$ & 0 & 0 & 0 & 0\\ \hline $W_4$ & $W_4$ & $W_5$ & $W_6$ & $W_7$ & 0 & 0 & 0 & 0 & 0 & 0 & 0 & $W_{12}$ & 0\\ \hline $W_5$ & $W_5$ & 0 & $W_8$ & $W_9$ & 0 & 0 & 0 & 0 & 0 & 0 & $W_{12}$ & 0 & 0\\ \hline $W_6$ & $W_6$ & $W_8$ & 0 & $W_{10}$ & 0 & 0 & 0 & 0 & 0 & $W_{12}$ & 0 & 0 & 0\\ \hline $W_7$ & $W_7$ & $W_9$ & $W_{10}$ & 0 & 0 & 0 & 0 & 0 & $W_{12}$ & 0 & 0 & 0 & 0\\ \hline $W_8$ & $W_8$ & 0 & 0 & $W_{11}$ & 0 & 0 & 0 & $W_{12}$ & 0 & 0 & 0 & 0 & 0\\ \hline $W_9$ & $W_9$ & 0 & $W_{11}$ & 0 & 0 & 0 & $W_{12}$ & 0 & 0 & 0 & 0 & 0 & 0\\ \hline $W_{10}$ & $W_{10}$ & $W_{11}$ & 0 & 0 & 0 & $W_{12}$ & 0 & 0 & 0 & 0 & 0 & 0 & 0\\ \hline $W_{11}$ & $W_{11}$ & 0 & 0 & 0 & $W_{12}$ & 0 & 0 & 0 & 0 & 0 & 0 & 0 & 0\\ \hline $W_{12}$ & $W_{12}$ & 0 & 0 & 0 & 0 & 0 & 0 & 0 & 0 & 0 & 0 & 0 & 0 \end{tabular}
 }
  \caption{Multiplication table for the weight spaces}\label{table_multiplication_weights}
 \end{table}

%

\section{Nilpotent orbits associated to a symmetric space}\label{ap:nilpotent}

In this appendix we give a brief outline of the classification of nilpotent orbits associated to a symmetric space $G/K$, where $G$ is a semisimple compact Lie group. We give only the details needed to justify our results; for a more thorough exposition, including proofs, the reader is referred to \cite{Collingwood}.

Let $\g$ be a semisimple compact Lie algebra. Given a non-zero nilpotent element $X\in\g^\cn$, we say that the triple $\{X, Y, H\}$ is a \emph{standard triple} for $X$ if $Y, H\in\g^\cn$ satisfy
$$
[H, X]=2X,\quad [H,Y]=-2Y,\quad [X,Y]=H.
$$
The \emph{Jacobson-Morozov} theorem states that standard triples for a non-zero nilpotent element always exists. As the triple is a copy of an $\sl_2$ subalgebra in $\g^\cn$, the adjoint action of $H$ on $\g^\cn$ will have integer eigenvalues and decompose $\g^\cn$ into eigenspaces. Denote by $\g^H_i$ the eigenspace of $H$ with eigenvalue $i\in\zn$. From this, we may form the \emph{Jacobson-Morozov parabolic subalgebra} associated to $X$:
$$
\p=\l\oplus\u,
$$
where
$$
\l=\g_0^H\ \text{and}\ \u=\sum_{i>0}\g^H_i.
$$
The nilpotent element $X\in\g^\cn$ is said to be \emph{distinguished} (in $\g^\cn$) if the only Levi subalgebra of $\g^\cn$ containing $X$ is $\g^\cn$ itself. It turns out that $X$ is distinguished if and only if
$$
\di\l=\di\u/[\u,\u].
$$
Any parabolic subalgebra $\p=\l\oplus\u\subset\g^\cn$ with Levi factor $\l$ satisfying this equality is said to be \emph{distinguished} (in $\g^\cn$). It is known that a distinguished parabolic subalgebra is the Jacobson-Morozov parabolic subalgebra of a distinguished element, and this distinguished element lies in $[\u,\u]^\perp\cap\u=\g^H_2$.

Now, let $\tilde\l$ be a Levi subalgebra of $\g^\cn$ and $\tilde\p$ a distinguished parabolic subalgebra of the semisimple algebra $[\tilde\l,\tilde\l]$. The subalgebra $\tilde\p$ is the Jacobson-Morozov parabolic subalgebra of a distinguished element $X$ in $[\tilde\l,\tilde\l]$. The element $X$ is also nilpotent in $\g^\cn$. It can be shown that this correspondence between pairs $(\tilde\l,\tilde\p)$, with $\tilde\l$ a Levi subalgebra of $\g^\cn$ and $\tilde\p$ a distinguished parabolic subalgebra of $[\tilde\l,\tilde\l]$, is invariant under the adjoint action of $G^\cn$. Moreover, the induced correspondence between $G^\cn$-conjugacy classes of such pairs and nilpotent orbits in $\g^\cn$ is one-to-one. Thus, the classification of such pairs $(\tilde\l,\tilde\p)$ gives a classification of the nilpotent orbits in $\g^\cn$.

Assume now that $\g=\k\oplus \m$ is a symmetric decomposition of $\g$. J. Sekiguchi \cite{Sekiguchi} proved that there is a natural bijection between nilpotent $K^\cn$-orbits in $\m^\cn$ and nilpotent $G_\rn$-orbits in $\g_\rn$, where   $G_\rn$ is the noncompact real form of $G^\cn$ associated to the compact symmetric space $G/K$ and  $\g_\rn$ its Lie algebra. These real orbits were classified by D. Djokovi\'{c} \cite{DjokovicI,DjokovicII}.

We now apply this to the Cayley plane $\Oc P^2=F_4/\Spin(9)$ with the symmetric deomposition $\f_4=\spin(9)\oplus\m$ given by \eqref{symdecomposition} with $I=\{3,4\}$. The corresponding non-compact real form of $F_4^\cn$ is $F_{4(-20)}$ and, according to the classification of nilpotent $F_{4(-20)}$-orbits (see \cite{Collingwood}, p. 151) there are two non-trivial nilpotent $\Spin(9)^\cn$-orbits in $\m^\cn$. Comparing with the classification of nilpotent $F_4^\cn$-orbits in $\f_4^\cn$ (see \cite{Collingwood}, p 128) we can identify these two orbits as follows.
%
 \begin{enumerate}
 \item One of these orbits intersects the  nilpotent $F_4^\cn$-orbit that corresponds to the Levi subalgebra $\tilde{\l}=\tilde{A}_1\cong \sl(2)$ with a  short $\f_4$ simple root, say $\alpha_4$. Clearly,   the subalgebra $\tilde{\q}$ of $[\tilde{\l},\tilde{\l}]$  generated by $H_4$ and the root space $(\f_4)_{\alpha_4}$ is a distinguished subalgebra. Then, any nonzero element $X_4\in(\f_4)_{\alpha_4}\subset \m^\cn$  is a representative of this nilpotent orbit.
\item The second intersects the  nilpotent $F_4^\cn$-orbit that corresponds to the Levi subalgebra $\tilde{\l}=\tilde{A}_2\cong \sl(3)$ with the short $\f_4$ simple roots $\{\alpha_3,\alpha_4\}$. A distinguished parabolic subalgebra of $[\tilde{\l},\tilde{\l}]$ is the subalgebra $\tilde{\q}=\tilde{\l}\oplus \tilde{\u}$ generated by $H_3$, $H_4$ and the root spaces $(\f_4)_{\alpha_3}$, $(\f_4)_{\alpha_4}$ and $(\f_4)_{\alpha_3+\alpha_4}$. A representative $X$ of this nilpotent orbit lies in
$$
[\tilde{\u},\tilde{\u}]^\perp \cap \tilde{\u}=(\f_4)_{\alpha_3}\oplus (\f_4)_{\alpha_4}.
$$
Observe that $X$ must have nonzero components both in $(\f_4)_{\alpha_3}$ and $(\f_4)_{\alpha_4}$, otherwise $X$ would not be distinguished in $\tilde{A}_2$. Then $X$ is of the form $X=X_3+X_4$, with $X_3$ and $X_4$ nonzero elements of $(\f_4)_{\alpha_3}$ and $(\f_4)_{\alpha_4}$, respectively.
\end{enumerate}

\end{document}